\theoremstyle{definition}
\theoremstyle{definition}
\tikzstyle arrowstyle=[scale=1]
\tikzstyle directed=[postaction={decorate,decoration={markings,
    mark=at position .65 with {\arrow[arrowstyle]{stealth}}}}]
\tikzstyle reverse directed=[postaction={decorate,decoration={markings,
    mark=at position .65 with {\arrowreversed[arrowstyle]{stealth};}}}]
\newtheorem{lemma}{Lemma}[section]
\newtheorem{theorem}{Theorem}[section]
\newtheorem{example}{Example}[section]
\begin{document}
	
\begin{frontmatter}
\title{{\bf Weighted implicit-explicit discontinuous Galerkin methods for two-dimensional Ginzburg-Landau equations on general meshes}}

\author{Zhen Guan\corref{cor1}}
\ead{zhenguan1993@foxmail.com}

\author{Xianxian Cao}

\cortext[cor1]{Corresponding author.}

\address{School of Mathematics and Statistics, Pingdingshan University, Pingdingshan, 467000, China}

\begin{abstract}
In this paper, a second-order linearized discontinuous Galerkin method on general meshes, which treats the backward differentiation formula of order two (BDF2) and Crank-Nicolson schemes as special cases, is proposed for solving the two-dimensional Ginzburg-Landau equations with cubic nonlinearity. By utilizing the discontinuous Galerkin inverse inequality and the mathematical induction method, the unconditionally optimal error estimate in $L^2$-norm is obtained. 
The core of the analysis in this paper resides in the classification and discussion of the relationship between the temporal step size $\tau$ and the spatial step size $h$, specifically distinguishing between the two scenarios of $\tau^2 \leq h^{k+1}$ and $\tau^2 > h^{k+1}$, where $k$ denotes the degree of the discrete spatial scheme. Finally, this paper presents two numerical examples involving various grids and polynomial degrees to verify the correctness of the theoretical results.
 
\end{abstract}

\begin{keyword} 
Ginzburg-Landau equations; Discontinuous Galerkin method; General meshes; discontinuous Galerkin inverse inequality; optimal error estimate.
\end{keyword}

\end{frontmatter}

\thispagestyle{empty}

\numberwithin{equation}{section}
\section{Introduction}\label{section01}

In this paper, we will consider the following initial and boundary problems of two-dimensional Ginzburg-Landau equations:   
\begin{align}
\begin{cases}
	u_t - (\nu + \mathrm{i}\alpha)\Delta u + (\kappa + \mathrm{i}\beta)\vert u \vert^2 u - \gamma u = 0, \quad (\boldsymbol{x},t) \in \Omega \times (0,T], \\
	u(\boldsymbol{x},t) = 0,  \quad (\boldsymbol{x},t) \in \partial\Omega\times(0,T], \\
	u(\boldsymbol{x},0) = u^0(\boldsymbol{x}), \quad \boldsymbol{x} \in \bar{\Omega},\label{202507262339}
\end{cases}
\end{align}
where $\Omega$ denotes a bounded convex polygonal domain in $ \mathbb{R}^2$, $\partial\Omega$ is the boundary of $\Omega$,
$\Delta$ is the Laplace operator, $\mathrm{i}$ represents the the imaginary unit, $\nu>0,\kappa>0,\alpha,\beta,\gamma$ are three given real constants, and $u^0(\boldsymbol{x})$ is a sufficiently smooth complex-valued function with a zero boundary trace. The aforementioned model was first proposed in 1950 by two Soviet physicists, Vitaly Ginzburg and Lev Landau, and has been widely applied in fields such as non-equilibrium hydrodynamic systems \cite{Abarzhi} and physical phase transitions \cite{Hoffmann}. Due to the practical importance of the equations, a large number of scholars have worked on solving them from both analytical and numerical perspectives. In terms of mathematical analysis, for example, Porubov and Velarde \cite{Porubov} obtained three new exact periodic solutions of the complex Ginzburg–Landau equation in terms of the Weierstrass elliptic function. Doering et al. \cite{Doering} investigated existence and regularity of solutions to the generalized complex Ginzburg-Landau equation subject to periodic boundary conditions in various spatial dimensions. Akhmediev et al. \cite{Akhmediev} presented novel stable solutions which are soliton pairs and trains of the 1D complex Ginzburg-Landau equation. For a more detailed mathematical analysis of the Ginzburg-Landau equations, interested readers are advised to refer to recent monographs \cite{Guo}. Due to the nonlinear nature of the equations, it is difficult to obtain their analytical solutions. To date, a large number of effective numerical methods have been developed, such as finite difference method, finite element method, meshless method, virtual element method, and $H^1$-Galerkin method. Xu and Chang \cite{Xu2011} presented three difference schemes of the Ginzburg-Landau Equation in two dimensions. They proved the stability of the two difference schemes by virtue of induction method and linearized analysis. Hu et al.  \cite{Chen2015} 
established fourth-order compact finite difference schemes for the 1D nonlinear Kuramoto-Tsuzuki equation with Neumann boundary conditions and conducted numerical analysis. They then extended the methods to 2D. Hao et al. \cite{Hao2015} proposed a high-order finite difference method for the two-dimensional complex Ginzburg–Landau equation. They proved that the proposed difference scheme was uniquely solvable and unconditionally convergent by energy methods. Shi and Liu \cite{Shi2020} presented a two-grid method (TGM)  for the complex Ginzburg-Landau equation, i.e., the original nonlinear system is analyzed on the coarse grid and then a simple linearized problem on the fine grid is solved. Furthermore, they also deduced the superclose estimation in the $H^1$-norm for the TGM scheme. Yang and Jia \cite{Yang2025} proposed the backward Euler-Galerkin finite element method for the two-dimensional Kuramoto–Tsuzuki equation. They obtained the optimal error estimate in $L^2$-norm without any spatio-temporal restrictions. Li et al. \cite{Li2025} proposed a fast element-free Galerkin (EFG) method for solving the nonlinear complex Ginzburg-Landau equation. With the help of the error-spliting argument, they proved the optimal error estimate in $L^2$ and $H^1$ norms. Wang and Li \cite{Wang2023} considered a linearized time-variable-step second order backward differentiation formula (BDF2) virtual element method for the nonlinear Ginzburg-Landau equation. By using the techniques of the discrete complementary convolution (DOC) kernels and the discrete complementary convolution (DCC) kernels, they derived the optimal error estimate in $L^2$-norm.
They also extended the scheme to coupled Ginzburg-Landau equations (CGLEs) \cite{Li2023}. Shi and Wang \cite{Shi2019} discussed an $H^1$-Galerkin mixed finite element method (MFEM) for the two-dimensional Ginzburg-Landau equation with the bilinear element and zero order Raviart-Thomas element. 

In this paper, based on the second-order $\theta$ scheme in time proposed by Liu et al. \cite{Liu2019} and the polygonal discontinuous Galerkin methods, we proposed a weighted implicit-explicit discontinuous Galerkin methods for two-dimensional Ginzburg-Landau equations. To the best of our  knowledge, this linearized polygonal discontinuous Galerkin scheme has not been presented in the literature. The discontinuous Galerkin algorithm employed in this paper is referred to as the symmetric interior penalty Galerkin (SIPG) method in the literature. This algorithm was first introduced and analyzed by Wheeler \cite{Wheeler1978}, and then was generalized to nonlinear elliptic and parabolic equations by Arnold \cite{Arnold1982}. Discontinuous Galerkin methods were first applied to polygonal meshes obtained by element agglomeration in \cite{Bassi2012}. As far as I know, in order to achieve the unconditional convergence of linearized numerical schemes, i.e., there is no restrictive condition between the time step and the space meshsize, the method commonly used in the literature is the space-time error splitting technique proposed by Li and Sun \cite{Li2013,Li2013b}. Since this analytical method requires the introduction of an additional time-discretized system, it brings a certain degree of complexity to numerical analysis to some extent. In this paper, drawing on the ideas proposed by Sun and Wang  \cite{Sun2017}, we present a relatively simple analytical method. Specifically, this method directly performs a theoretical analysis of the fully discrete scheme without introducing a time-discretized system, thereby simplifying the complex error derivation process used in previous studies. The core idea of the argument is to conduct a classified discussion on the relationship between $\tau$ and $h$. In addition, this paper presents, for the first time, a generalized discontinuous Galerkin inverse inequality. By combining this inequality with a transfer formula, we ultimately establish the optimal-order estimate of the $L^2$-norm for the numerical solution-thus filling the gap in the convergence analysis of discontinuous Galerkin schemes for this class of nonlinear problems.

The structure and content of this paper are organized as follows. In Section \ref{section02}, the derivation process of the fully discrete numerical scheme is presented, and several important preparatory lemmas are proven. In Section \ref{section3}, the theoretical analysis of the weighted implicit-explicit discontinuous finite element method is presented, including its stability and mesh-ratio-free convergence. In Section \ref{section4}, we present two numerical examples to verify the correctness of the theoretical analysis, covering both convex and non-convex mesh partitions. Finally, we summarize the content of this paper in Section \ref{section5}, and briefly discuss potential directions for future research.

\section{Weighted implicit-explicit discontinuous Galerkin
formulation}\label{section02}
\subsection{The variational formulation}
Unless otherwise specified, all functions and vector spaces considered in this paper are complex. For a given open subset $D$ of the domain $\Omega$, let $|\cdot|_{m,p,D}$ and $\|\cdot\|_{m,p,D}$ be the seminorm and norm of the Sobolev space $W^{m,p}(D)$, respectively, where $m \geq 0$ is an integer and $1 \leq p \leq \infty$  a real number. When $p=2$, we denote $W^{m,2}(D)$ by $H^{m}(D)$ and the corresponding seminorm and norm are abbreviated as $|\cdot|_{m,D}$ and $\|\cdot\|_{m,D}$, respectively. Let $H^{m}_{0}(D)$ be the closure of  $C_{0}^{\infty}(\mathrm{\Omega})$ with respect to the norm $\|\cdot\|_{m,D}$. Denote by $(\cdot,\cdot)_{D}$ and $\|\cdot\|_{D}$ the inner product and the corresponding
norm of the Hilbert space $L^{2}(D)$, respectively. When $D=\Omega$, we omit the subscript $\Omega$ in the above norm, seminorm and inner product. Finally, for a strongly measurable function $v:(0,T)\rightarrow X$, we introduce the Bochner space defined as 
$$
L^{p}({(0,T);X})=\big\{v:||v||_{L^{p}((0,T);X)}<\infty\big\},
$$
where $X$ is a complex Banach space. 

By virtue of the notations introduced above, it is easy to derive the weak formulation of the equation \eqref{202507262339}:
for all $0<t\leq T$, find $u\in L^{2}((0,T);H^{1}_{0}(\Omega))\cap L^{2}((0,T);L^{2}(\Omega))$ with the initial condition $u(0)=u^0(\boldsymbol{x})$ such that 
\begin{align}
(u_t,v)+(\nu + \mathrm{i}\alpha)a(u, v)+(\kappa + \mathrm{i}\beta)(\vert u \vert^2 u,v)-\gamma (u,v)=0,\quad \forall ~v\in H^{1}_{0}(\Omega),\label{202507262352}
\end{align}
where $a(u,v):=(\nabla u,\nabla v)$.
\subsection{Discrete settings of discontinuous Galerkin}
Let $\mathcal{T}_{h}$ be a sequence of partitions of $\mathrm{\Omega}$ consisting of arbitrary polygons $K$ with its measure denoted as $\vert K\vert$, $h$ is the spatial mesh size parameter which is defined as $h=\max_{K\in\mathcal{T}_{h}}h_{K}$, where $h_{K}$ is the diameter of $K$. A edge $E$ is defined as a closed subset in $\bar{\Omega}$ such that either there exist distinct mesh elements $K_1,K_2\in\mathcal{T}_{h}$ such that $E = \partial K_1\cap \partial K_2$, or there exists a mesh element $K$ such that $F = \partial K\cap \partial \Omega$. We observe that in the case of general meshes containing non-convex polygon, interior edges are not always parts of hyperplanes. Denote the set of all edges in the partition $\mathcal{T}_{h}$ by $\mathcal{E}_{h}$. Moreover, interior edges are collected in the set $\mathcal{E}^{i}_{h}$ and boundary edges in $\mathcal{E}^{b}_{h}$, so that $\mathcal{E}_{h}=\mathcal{E}^{i}_{h}\cup\mathcal{E}^{b}_{h}$. 
It should be emphasized here that mesh partitions with hanging nodes are allowed.

We also assume that the family of meshes $\mathcal{T}_{h}$ satisfies the regularity conditions outlined in \cite{DiPietroErn2012}, i.e., there exists a real number $\rho\in(0,1)$, independent of $h$ and called the mesh regularity parameter, such that there exists a matching simplicial submesh $\mathfrak{T}_{h}$ that satisfies the following conditions: \\
(i) Shape regularity. For any simplex $\tau\in\mathfrak{T}_{h}$, denoting by $h_\tau$ its diameter
and by $r_\tau$ its inradius, it holds 
$$\rho h_\tau\leq r_\tau;$$
(ii) Contact regularity. For any mesh element $K\in\mathcal{T}_{h}$ and any simplex $\tau\in\mathfrak{T}_{K}$
, where $\mathfrak{T}_{K}$ is the set of simplices contained in
$K$, it holds 
$$\rho h_T\leq h_\tau.$$

Let $v$ be a scalar-valued function defined on $\Omega$ and assume that $v$ is smooth enough to admit on all $E\in \mathcal{E}^{i}_{h}$ a possibly two-valued trace. Then, for all $E\in \mathcal{E}^{i}_{h}$ shared by two adjacent elements $K_1$ and $K_2$, the interior edge $E$ is oriented by means of the unit normal vector $\boldsymbol{n}_E$ pointing from $K_1$ to $K_2$,
i.e., 
$$\boldsymbol{n}_E=\boldsymbol{n}_{K_{1}E}=-\boldsymbol{n}_{K_{2}E},$$
where $\boldsymbol{n}_{K_{i}E}$ denotes the unit normal vector to pointing out of $K_{i}$, $i=1,2$. The average and jump are defined as 
\begin{align}
\{v\} = \frac{1}{2}(v\vert_{K_1}+v\vert_{K_2}),\quad [v] = v\vert_{K_1}-v\vert_{K_2},\notag
\end{align}
respectively. By convention, we can extended the definition of jump and average to edges that belong to the boundary edge $E = \partial K\cap\partial\Omega$: 
$$
\{v\} = [v] = v\vert_{K}.
$$
At this moment, $\boldsymbol{n}_E$ is taken to be the unit outward vector normal to $\partial\Omega$.

Let $k$ be a positive integer, the discontinuous finite element space $V_{h}^{k}$ is choose to be
\begin{align}
	V_{h}^{k}=\{v_{h}\in L^{2}(\Omega): v_{h}|_{K}\in \mathbb{P}_{k}(K),~\forall~K\in\mathcal{T}_{h}\},\notag 
\end{align}
where $\mathbb{P}_{k}(K)$ denotes the space of polynomials of total degree less than or equal to $k$. This finite-dimensional space equipped the 
with the following norm 
\begin{align}
\|v_{h}\|_{\text{DG}}:= \bigg(\sum_{K \in \mathcal{T}_h} \int_K |\nabla v_h|^2\mathrm{d}\boldsymbol{x}+ \sum_{E \in \mathcal{E}_h} \frac{1}{h_E} \int_E  [v_h] ^2\mathrm{d}s\bigg)^{1/2},\label{2507281430}
\end{align}
where $h_E$ is the diameter of the edge $E$ and $\vert \cdot \vert$ denotes the Euclidean norm in $\mathbb{R}^2$, satisfies the discrete Sobolev embedding inequality \cite{PietroErn2010}
\begin{align}
\| v_h\|_{0,p} \leq C  \| v_h \|_{\text{DG}}, \quad \forall ~v_h \in V_{h}^{k}, \notag
\end{align}
where $1 \leq p < \infty$. We also present the discrete Ladyzhenskaya's inequality \cite{GazcaOrozcoKaltenbach2025} that play a crucial role in deriving the optimal error estimate:
\begin{align}
	\| v_h\|_{0,4} \leq C  \| v_h \|_{\text{DG}}^{1/2}\| v_h\|^{1/2}, \quad \forall ~v_h \in V_{h}^{k}, \label{202509042305}
\end{align}

The local $L^2$-orthogonal projector $\Pi^{0,k}_{K}:L^{2}(K)\rightarrow \mathbb{P}_{k}(K)$ is defined as follows: for all $v\in L^2(K)$, the polynomial $\Pi^{0,k}_{K}v$ satisfies
$$(\Pi^{0,k}_{K}v-v,q)_{K}=0,\quad \forall~q\in \mathbb{P}_{k}(K).$$
The global $L^2$-orthogonal projector $\Pi^{0,k}_{h}:L^{2}(\Omega)\rightarrow V_{h}^{k}$ can be easily obtained, i.e., for all $v\in L^{2}(\Omega)$ and all $K\in \mathcal{T}_{h} $
$$(\Pi^{0,k}_{h}v)|_{K}=\Pi^{0,k}_{K}(v|_{K}).$$
According to the exquisite argument in the book of Di Pietro and Droniou \cite{PietroDroniou2020}, The global $L^2$-orthogonal projector $\Pi^{0,k}_{h}$ satisfies the following approximation and boundedness properties:
\begin{align}
&||v-\Pi^{0,k}_{h}v||\leq Ch^{k+1}||v||_{k+1},\quad \forall~v\in H^{k+1}(\Omega),\label{2507281414}\\
&||\Pi^{0,k}_{h}v||_{0,\infty}\leq C||v||_{0,\infty}, \quad \forall~v\in L^{\infty}(\Omega),
\end{align}
respectively.

In addition to the above assumption about the mesh $\mathcal{T}_{h}$, we suppose the mesh is quasi-uniform, i.e.,
\begin{align}
\rho h\leq h_{K},\quad \forall~ K\in \mathcal{T}_{h}.\label{2507281534}
\end{align}
Then, the following global inverse inequality is true \cite{Chave2016}:
\begin{align}
||v_h||_{0,\infty}\leq Ch^{-1}||v_h||, \quad \forall~v_h\in V_h^{k}.\label{2507281409}
\end{align}
\subsection{The fully discrete numerical scheme}
Let $N$ be a positive integer and let $\tau=T/N$ denotes the time step size. For any smooth function $v(\boldsymbol{x},t)$, we always use the following notations: 
\begin{align}
t_{n-\theta}&:=(n-\theta)\tau,\quad v(t_{n-\theta})(\boldsymbol{x}):=v(\boldsymbol{x},t_{n-\theta}), \quad n = 1, 2, \cdots, N-1, N,\quad \theta\in\left[0, 1/2 \right];\\
D_\tau v^{n-\theta} &:= \frac{\left( 3-2\theta \right) v^{n}-(4-4\theta) v^{n-1}+\left(1-2\theta\right) v^{n-2}}{2\tau}, \quad n = 2, 3,\cdots, N-1, N;\\
v^{n-\theta} &:= (1-\theta) v^{n} + \theta v^{n-1}, \quad n = 1, 2,\cdots, N-1, N; \\
\hat{v}^{n-\theta} &:= (2 -\theta) v^{n-1} - (1-\theta) v^{n-2}, \quad n = 2, 3,  \cdots, N-1, N.
\end{align}

In order to approximate the bilinear form $a(u,v)=(\nabla u,\nabla v)$, we employing the following discrete bilinear form:
\begin{align}
a_h(v_h, w_h) &:= \sum_{K \in \mathcal{T}_h} \int_K\nabla v_h \cdot \nabla w_h \, \mathrm{d}x 
- \sum_{E \in \mathcal{E}_h} \int_E \{ \nabla_h v_h \} \cdot \boldsymbol{n}_E [w_h] \, \mathrm{d}s \\
&\quad - \sum_{E \in \mathcal{E}_h} \int_E [v_h] \{ \nabla_h w_h \} \cdot \boldsymbol{n}_E \, \mathrm{d}s 
+ \sum_{E \in \mathcal{E}_h} \frac{\lambda}{h_E}\int_E [v_h][w_h] \, \mathrm{d}s, \quad \forall~v_h, w_h\in V_{h}^{k}, 
\end{align}
where the parameter $\lambda$ is called penalty term, which is a sufficiently large non-negative real number. It is well-known that the discrete bilinear form defined above satisfies the following coercivity and continuity properties:
\begin{align}
C \| v_h \|_{\text{DG}}^{2} \leq a_{h}(v_{h},v_{h})=:\|v_{h}\|^2_{a_h} \leq C \| v_h \|_{\text{DG}}^{2}, \quad \forall~ v_h\in V_h^{k}.\label{2508251529}
\end{align}
In order to obtain the unconditionally optimal order error estimate for the fully discrete numerical scheme, we define  the elliptic projection operator $R_{h}: H^{2}(\Omega)\rightarrow V_{h}^{k}$ such that 
\begin{align}
	a_{h}(R_{h}u,v_{h})=a_h(u,v_{h}),\quad \forall v_{h}\in V_{h}^{k}.\label{2507271753}
\end{align}
According to the classical theory of discontinuous finite elements for solving elliptic problems \cite{Riviere2008}, we can obtain the following projection error:
\begin{align}
	\|R_{h}u-u\|\leq Ch^{k+1}\|u\|_{k+1}, \quad \forall~ u\in  H^{k+1}(\Omega)\label{2507281415}.
\end{align}

With above notations, the weighted implicit-explicit (IMEX) discontinuous Galerkin algorithm is given as follows: Find $u_{h}^{n}\in V_{h}^{k}$ such that for $n=2, 3, \cdots, N-1, N$ 
\begin{align}
	&(D_\tau u_{h}^{n-\theta},v_{h})+(\nu + \mathrm{i}\alpha)a_h(u_h^{n-\theta}, v_h)+(\kappa + \mathrm{i}\beta)(\vert \hat{u}_h^{n-\theta} \vert^2 u_h^{n-\theta},v_h)\notag\\
	&\quad-\gamma (u_h^{n-\theta},v_h)=0,\quad \forall ~v_h\in V_h^k,\label{07271656}
\end{align}
with the initial approximation $u_{h}^0=R_hu^0$. 

Since the above scheme is a three-level method, we need to additionally provide a second-order calculation method for $u(t_1)$. Here, we analyze a backward Euler Galerkin method for this purpose, i.e., $u_h^1$ is the solution of the following equation:
\begin{align}
	\left(\frac{u_h^{1}-u_h^0}{\tau},v_{h}\right)+(\nu + \mathrm{i}\alpha)a_h(u_h^{1}, v_h)+(\kappa + \mathrm{i}\beta)(\vert u_h^{0} \vert^2 u_h^{1},v_h)
     -\gamma (u_h^{1},v_h)=0,\quad \forall ~v_h\in V_h^k.\label{07271841}
\end{align}
\subsection{Some vital results}

\begin{lemma}\label{lemma1}
\text{(\cite{SunSunGao2016,GaoSunSun2015})} Assume that $\mathcal{V}$ is an complex inner product space equipped  with the inner product $(\cdot, \cdot)_{\mathcal{V}}$ and the induced norm $\|\cdot\|_{\mathcal{V}}$. Then, for any $v^0, v^1, \ldots, v^N \in \mathcal{V}$, it holds that 
\begin{align}
\text{Re}(D_{\tau}v^{n-\theta}, v^{n-\theta})_{\mathcal{V}} \geq \frac{1}{4\tau} \left( E^{n} - E^{n-1} \right), \quad 2 \leq n \leq N,
\end{align}
where
\begin{align}
E^{n} = (3-2\theta)\|v^{n}\|_{\mathcal{V}}^2 - (1-2\theta)\|v^{n-1}\|_{\mathcal{V}}^2 + (2-\theta)(1-2\theta)\|v^{n}-v^{n-1}\|_{\mathcal{V}}^2, \quad 1 \leq n \leq N.
\end{align}
In addition, the following inequality holds 
\begin{align}
E^n \geq \frac{1}{1-\theta}\|v^n\|_{\mathcal{V}}^2, \quad 1 \leq n \leq N.
\end{align}	
\end{lemma}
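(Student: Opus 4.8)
The plan is to prove both inequalities by a direct ``completing the square'' computation: every occurrence of $\operatorname{Re}(\cdot,\cdot)_{\mathcal V}$ will be eliminated in favour of squared norms via the polarization identity $2\operatorname{Re}(x,y)_{\mathcal V}=\|x\|_{\mathcal V}^2+\|y\|_{\mathcal V}^2-\|x-y\|_{\mathcal V}^2$, and then the right-hand side will be recognized as a telescoping quantity plus a residual term that is nonnegative precisely on the range $\theta\in[0,1/2]$.

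For the first inequality I would begin by rewriting the two ingredients in terms of the backward differences $\delta^{n}:=v^{n}-v^{n-1}$ and $\delta^{n-1}:=v^{n-1}-v^{n-2}$. Using that the three coefficients $3-2\theta$, $-(4-4\theta)$, $1-2\theta$ sum to zero one finds $2\tau D_\tau v^{n-\theta}=(3-2\theta)\delta^{n}-(1-2\theta)\delta^{n-1}$, while $v^{n-\theta}=(1-\theta)v^{n}+\theta v^{n-1}=v^{n-1}+(1-\theta)\delta^{n}$. Substituting these into $2\tau\operatorname{Re}(D_\tau v^{n-\theta},v^{n-\theta})_{\mathcal V}$ and expanding generates the cross terms $\operatorname{Re}(\delta^{n},v^{n-1})_{\mathcal V}$, $\operatorname{Re}(\delta^{n-1},v^{n-1})_{\mathcal V}$ and $\operatorname{Re}(\delta^{n-1},\delta^{n})_{\mathcal V}$; applying the polarization identity to each of them (the last one introducing the second difference $\delta^{n}-\delta^{n-1}=v^{n}-2v^{n-1}+v^{n-2}$) and then collecting the coefficients of $\|v^{n}\|_{\mathcal V}^2$, $\|v^{n-1}\|_{\mathcal V}^2$, $\|v^{n-2}\|_{\mathcal V}^2$, $\|\delta^{n}\|_{\mathcal V}^2$ and $\|\delta^{n-1}\|_{\mathcal V}^2$ should yield exactly $\tfrac12\bigl(E^{n}-E^{n-1}\bigr)+\tfrac{(1-2\theta)(1-\theta)}{2}\,\|v^{n}-2v^{n-1}+v^{n-2}\|_{\mathcal V}^2$. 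Since $(1-2\theta)(1-\theta)\ge0$ for $\theta\in[0,1/2]$, dividing by $2\tau$ gives the claimed estimate (in fact an exact identity from which it follows).

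For the last inequality I would substitute $\|v^{n}-v^{n-1}\|_{\mathcal V}^2=\|v^{n}\|_{\mathcal V}^2-2\operatorname{Re}(v^{n},v^{n-1})_{\mathcal V}+\|v^{n-1}\|_{\mathcal V}^2$ into the definition of $E^{n}$, subtract $\tfrac1{1-\theta}\|v^{n}\|_{\mathcal V}^2$, and observe that the resulting quadratic form in the pair $(v^{n},v^{n-1})$ has a Gram matrix whose determinant vanishes identically in $\theta$---a one-line check---so it is a perfect square. A short computation identifies it as $(1-2\theta)(1-\theta)\,\bigl\|\,v^{n-1}-\tfrac{2-\theta}{1-\theta}\,v^{n}\,\bigr\|_{\mathcal V}^2$, which is nonnegative for $\theta\in[0,1/2]$ (and zero when $\theta=1/2$); this is precisely the asserted bound.

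No machinery beyond the polarization identity is needed, so there is no real conceptual obstacle; the one place requiring care is the coefficient bookkeeping in the first part, and the only substantive point is that the residual squares in both parts come multiplied by the common factor $(1-2\theta)(1-\theta)$, which is exactly why the hypothesis $0\le\theta\le1/2$ is what makes the two inequalities hold. As the statement is classical, I would keep the exposition short and cite \cite{SunSunGao2016,GaoSunSun2015} for the corresponding real-variable computations.
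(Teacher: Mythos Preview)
Your approach is correct. The paper itself does not supply a proof of this lemma: it is stated with a citation to \cite{SunSunGao2016,GaoSunSun2015} and used as a black box, so there is nothing in the paper to compare against. Your plan---rewriting $2\tau D_\tau v^{n-\theta}=(3-2\theta)\delta^n-(1-2\theta)\delta^{n-1}$ and $v^{n-\theta}=v^{n-1}+(1-\theta)\delta^n$, expanding via the polarization identity, and recognizing the result as $\tfrac12(E^n-E^{n-1})$ plus the nonnegative remainder $\tfrac{(1-2\theta)(1-\theta)}{2}\|v^n-2v^{n-1}+v^{n-2}\|_{\mathcal V}^2$---goes through exactly as you describe; the coefficient bookkeeping checks out, and the same factor $(1-2\theta)(1-\theta)$ indeed appears in front of the perfect square $\bigl\|v^{n-1}-\tfrac{2-\theta}{1-\theta}v^n\bigr\|_{\mathcal V}^2$ in the second inequality. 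Your write-up would therefore provide a self-contained proof the paper omits, and your closing remark about citing the references for the real-variable version is appropriate.
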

\begin{lemma}\label{lemma2} Assuming that polygon mesh subdivision of $\Omega$ is regular and quasi-uniform and $v\in  H^{k+1}(\Omega)$, there exists a positive constant $C$ independent of the mesh subdivision parameters $h$ and $\tau$, such that 
\begin{align}
\|v-R_hv\|_{0,\infty}+\|R_hv\|_{0,\infty}\leq C;
\end{align}
\begin{proof}
Using the error estimates  \eqref{2507281415} and  \eqref{2507281414} satisfied by the elliptic projection and $L^2$-orthogonal projector, respectively, and the global inverse inequality \eqref{2507281409}, we obtain 
\begin{align*}
\|v-R_hv\|_{0,\infty}&=\|v-\Pi^{0,k}_{h}v+\Pi^{0,k}_{h}v-R_hv\|_{0,\infty}\\
&\leq \|v-\Pi^{0,k}_{h}v\|_{0,\infty}+\|\Pi^{0,k}_{h}v-R_hv\|_{0,\infty}\\
&\leq C\|v\|_{0,\infty}+Ch^{-1}\|\Pi^{0,k}_{h}v-R_hv\|\\
&\leq C\|v\|_{k+1}+Ch^{-1}(\|\Pi^{0,k}_{h}v-v\|+\|v-R_hv\|)\\
&\leq C\|v\|_{k+1}+Ch^{k}\|v\|_{k+1}\\
&\leq C. 
\end{align*}
Furthermore, by virtue of the triangle inequality, we have 
\begin{align*}
\|R_hv\|_{0,\infty}\leq \|v-R_hv\|_{0,\infty}+\|v\|_{0,\infty}\leq C.
\end{align*}
The proof of Lemma \ref{lemma2} is complete.
\end{proof}
\end{lemma}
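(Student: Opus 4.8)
The plan is to estimate $\|v-R_hv\|_{0,\infty}$ first and then read off the bound on $\|R_hv\|_{0,\infty}$ for free, by a triangle inequality against $\|v\|_{0,\infty}$; the latter quantity is finite because in two dimensions one has the classical Sobolev embedding $H^{k+1}(\Omega)\hookrightarrow L^\infty(\Omega)$ whenever $k\geq 1$, which is the standing hypothesis on $k$. The obstruction to applying the global inverse inequality \eqref{2507281409} directly is that $v-R_hv$ is \emph{not} a discrete function, so I would insert the $L^2$-orthogonal projector $\Pi^{0,k}_{h}v$ as an intermediary and write $v-R_hv=(v-\Pi^{0,k}_{h}v)+(\Pi^{0,k}_{h}v-R_hv)$, estimating the two pieces separately.

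For the first piece $v-\Pi^{0,k}_{h}v$, I would use the triangle inequality together with the $L^\infty$-stability of $\Pi^{0,k}_{h}$ recorded just after \eqref{2507281414} to get $\|v-\Pi^{0,k}_{h}v\|_{0,\infty}\leq\|v\|_{0,\infty}+\|\Pi^{0,k}_{h}v\|_{0,\infty}\leq C\|v\|_{0,\infty}$, and then the Sobolev embedding turns this into $C\|v\|_{k+1}$. For the second piece, since $\Pi^{0,k}_{h}v-R_hv\in V_h^k$, the inverse inequality \eqref{2507281409} does apply and yields $\|\Pi^{0,k}_{h}v-R_hv\|_{0,\infty}\leq Ch^{-1}\|\Pi^{0,k}_{h}v-R_hv\|$.

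It then remains to bound $\|\Pi^{0,k}_{h}v-R_hv\|$ in $L^2$. I would insert $v$ once more, $\|\Pi^{0,k}_{h}v-R_hv\|\leq\|\Pi^{0,k}_{h}v-v\|+\|v-R_hv\|$, and invoke the optimal $L^2$ approximation bounds \eqref{2507281414} for $\Pi^{0,k}_{h}$ and \eqref{2507281415} for $R_h$ to arrive at $\|\Pi^{0,k}_{h}v-R_hv\|\leq Ch^{k+1}\|v\|_{k+1}$. Multiplying by $h^{-1}$ gives $\|\Pi^{0,k}_{h}v-R_hv\|_{0,\infty}\leq Ch^{k}\|v\|_{k+1}$, which is bounded because $h$ is bounded above and $k\geq 1$. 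Adding the two contributions produces $\|v-R_hv\|_{0,\infty}\leq C\|v\|_{k+1}\leq C$, and finally $\|R_hv\|_{0,\infty}\leq\|v-R_hv\|_{0,\infty}+\|v\|_{0,\infty}\leq C$, which closes the argument.

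I do not anticipate any genuine difficulty here: the one point requiring care is that the inverse inequality must be applied to the \emph{discrete} difference $\Pi^{0,k}_{h}v-R_hv$ rather than to $v-R_hv$, at the cost of one power of $h$, and this loss is harmless precisely because the $L^2$ estimate supplies a compensating factor $h^{k+1}$ with $k\geq 1$. It is also worth noting that the regularity and quasi-uniformity assumptions on the mesh are exactly what make the inverse inequality \eqref{2507281409} and the projection estimates \eqref{2507281414}, \eqref{2507281415} available, so no extra hypotheses are needed.
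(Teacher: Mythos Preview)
Your proposal is correct and follows essentially the same approach as the paper: both insert the $L^2$-projector $\Pi^{0,k}_{h}v$, control $\|v-\Pi^{0,k}_{h}v\|_{0,\infty}$ via the $L^\infty$-stability of $\Pi^{0,k}_{h}$, apply the global inverse inequality \eqref{2507281409} to the discrete difference $\Pi^{0,k}_{h}v-R_hv$, and then use the optimal $L^2$ bounds \eqref{2507281414} and \eqref{2507281415} to absorb the lost factor of $h^{-1}$. The only cosmetic difference is that you make the Sobolev embedding $H^{k+1}(\Omega)\hookrightarrow L^\infty(\Omega)$ explicit, whereas the paper invokes it tacitly when passing from $\|v\|_{0,\infty}$ to $\|v\|_{k+1}$.
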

\begin{lemma}\label{lemma3}
For the energy norm defined in \eqref{2507281430}, the  following generalized discontinuous Galerkin inverse inequality holds: 
\begin{align}
\|v_h\|_{\text{DG}}\leq Ch^{-1}\|v_h\|,\quad \forall~v_h\in V_{h}^{k}.	\label{08292320}
\end{align}
\begin{proof}
\begin{align*}
\|v_h\|_{\text{DG}}&=\bigg(\sum_{K \in \mathcal{T}_h} \int_K |\nabla v_h|^2\mathrm{d}\boldsymbol{x}+ \sum_{E \in \mathcal{E}_h} \frac{1}{h_E} \int_E  [v_h] ^2\mathrm{d}s\bigg)^{1/2}\\
&\leq C\bigg(\sum_{K \in \mathcal{T}_h} h_K^{-2}\int_K | v_h|^2\mathrm{d}\boldsymbol{x}+ \sum_{K \in \mathcal{T}_h} h_K^{-1} \int_{\partial K} |v_h| ^2\mathrm{d}s\bigg)^{1/2}\\
&\leq C\bigg(\sum_{K \in \mathcal{T}_h} h_K^{-2}\int_K | v_h|^2\mathrm{d}\boldsymbol{x}+ \sum_{K \in \mathcal{T}_h} h_K^{-2} \int_{K} |v_h| ^2\mathrm{d}s\bigg)^{1/2}\\
&\leq Ch^{-1}\|v_h\|,
\end{align*}
where we have used the discrete inverse inequality and the  trace inequality on regular mesh sequences  \cite{PietroDroniou2020} 
\begin{align*}
\|\nabla v\|_{K} \leq C h_K^{-1} \|v\|_{K},\quad \| v \|_{\partial K} \leq Ch_K^{-\frac{1}{2}} \| v \|_{K}, \quad \forall ~ v\in \mathbb{P}_{k}(K),
\end{align*} 
the geometric inequality \cite{PietroDroniou2020}
$$Ch_K\leq h_E\leq h_K,\quad E\subset\partial K,$$
together with the quasi-uniform assumption \eqref{2507281534}. 
\end{proof}
\end{lemma}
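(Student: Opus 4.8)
The plan is to estimate separately the two contributions to $\|v_h\|_{\text{DG}}^2$ — the broken-gradient term $\sum_{K\in\mathcal{T}_h}\|\nabla v_h\|_K^2$ and the jump term $\sum_{E\in\mathcal{E}_h}h_E^{-1}\|[v_h]\|_E^2$ — and to bound each of them by $Ch^{-2}\|v_h\|^2$. Since $v_h|_K\in\mathbb{P}_k(K)$ on every polygon $K$, the tools I would rely on are the polynomial (discrete) inverse and trace inequalities valid on the regular mesh sequence, as recorded in \cite{PietroDroniou2020}: $\|\nabla w\|_K\le Ch_K^{-1}\|w\|_K$ and $\|w\|_{\partial K}\le Ch_K^{-1/2}\|w\|_K$ for all $w\in\mathbb{P}_k(K)$, together with the geometric comparison $Ch_K\le h_E\le h_K$ for $E\subset\partial K$ and the quasi-uniformity \eqref{2507281534}.

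For the gradient term I would apply the elementwise inverse inequality on each $K$ and then use $h_K\ge\rho h$ to extract a uniform factor, giving $\sum_K\|\nabla v_h\|_K^2\le C\sum_K h_K^{-2}\|v_h\|_K^2\le Ch^{-2}\sum_K\|v_h\|_K^2=Ch^{-2}\|v_h\|^2$. For the jump term, on an interior edge $E=\partial K_1\cap\partial K_2$ I would first use the triangle inequality $\|[v_h]\|_E\le\|v_h|_{K_1}\|_E+\|v_h|_{K_2}\|_E$ (on a boundary edge $[v_h]=v_h|_K$), then bound each face trace by $\|v_h|_{K_i}\|_E\le\|v_h\|_{\partial K_i}\le Ch_{K_i}^{-1/2}\|v_h\|_{K_i}$, and combine with $h_E\ge Ch_{K_i}$ to obtain $h_E^{-1}\|[v_h]\|_E^2\le C\big(h_{K_1}^{-2}\|v_h\|_{K_1}^2+h_{K_2}^{-2}\|v_h\|_{K_2}^2\big)$. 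Reorganising the sum over edges as a sum over elements — each $K$ contributing to the finitely many edges of $\partial K$ — then yields $\sum_E h_E^{-1}\|[v_h]\|_E^2\le C\sum_K h_K^{-2}\|v_h\|_K^2\le Ch^{-2}\|v_h\|^2$. Adding the two bounds and taking square roots gives \eqref{08292320}.

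The step I expect to require the most care is the passage from edges to elements in the jump term. On general meshes an element may have many short edges, and with hanging nodes an edge of $K$ may be only a portion of $\partial K$, so I must invoke the mesh-regularity and contact-regularity conditions (the matching simplicial submesh $\mathfrak{T}_h$) to guarantee both that the number of edges abutting any $K$ is bounded uniformly in $h$ and that $h_E\simeq h_K$ whenever $E\subset\partial K$. It is also essential that the trace estimate be the \emph{discrete} (polynomial) one — legitimate because $v_h$ is piecewise polynomial rather than merely $H^1$ — since on a non-convex polygon the edges need not be hyperplanes and a naive continuous trace bound would not carry the $h_K^{-1/2}$ scaling. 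Once these geometric facts are in place, the remaining manipulations are the routine summations described above.
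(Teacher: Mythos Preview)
Your proposal is correct and follows essentially the same approach as the paper: bound the gradient term via the elementwise polynomial inverse inequality, bound the jump term by splitting $[v_h]$ into one-sided traces and applying the discrete trace inequality $\|v_h\|_{\partial K}\le Ch_K^{-1/2}\|v_h\|_K$ together with $h_E\simeq h_K$, then invoke quasi-uniformity \eqref{2507281534} to extract the global $h^{-1}$. Your discussion of the edge-to-element reorganisation and the role of mesh regularity is in fact more explicit than the paper's own proof, which carries out the same chain of inequalities more tersely.
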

\begin{lemma}\label{lemma4}
\text{(\cite{HeywoodRannacher1990})} Let $a\geq 0, b \geq 0$, $\{\eta^i\}_{i=1}^{N}$ and $\{\xi^i\}_{i=1}^{N}$ be two series of non-negative real numbers such that
$$
\quad \eta^n + \tau\sum_{i=1}^n \xi^i\leq a + b\tau \sum_{i=1}^n \eta^i, \quad 1 \le n \le N.
$$
Then, when $\tau \le \frac{1}{2b}$, it holds that
$$\eta_n +\tau\sum_{i=1}^n \xi^i\leq a \exp(2b n \tau), \quad 1 \le n \le N.$$
\end{lemma}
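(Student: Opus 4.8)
\emph{Proof proposal.} The plan is to collapse the stated estimate into a one-step recursion for the quantity sitting on the right-hand side and then iterate it. First I would introduce the shorthand $y_n := a + b\tau\sum_{i=1}^{n}\eta^i$ for $0\le n\le N$, so that $y_0 = a$ and the hypothesis reads $\eta^n + \tau\sum_{i=1}^n \xi^i \le y_n$. Discarding the nonnegative term $\tau\sum_{i=1}^n\xi^i$ then gives in particular $\eta^n \le y_n$ for every $n$. (If $b=0$ the conclusion is immediate, since in that case $y_n = a$; so from now on I assume $b>0$, whence the bound $\tau\le \tfrac{1}{2b}$ is meaningful.)

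Next I would exploit the telescoping identity $y_n - y_{n-1} = b\tau\,\eta^n$. Inserting $\eta^n\le y_n$ yields $y_n - y_{n-1} \le b\tau\,y_n$, i.e. $(1-b\tau)\,y_n \le y_{n-1}$. The hypothesis $\tau\le\tfrac{1}{2b}$ guarantees $b\tau\le\tfrac12$, hence $1-b\tau\ge\tfrac12>0$, so we may divide; combining this with the elementary inequality $\tfrac{1}{1-x}\le 1+2x\le e^{2x}$, valid for $0\le x\le\tfrac12$, gives the clean one-step bound $y_n \le \tfrac{1}{1-b\tau}\,y_{n-1} \le e^{2b\tau}\,y_{n-1}$ for $1\le n\le N$.

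Iterating this recursion from $n$ down to $0$ produces $y_n \le e^{2bn\tau}\,y_0 = a\,e^{2bn\tau}$, and substituting back into the hypothesis gives $\eta^n + \tau\sum_{i=1}^n\xi^i \le y_n \le a\,e^{2bn\tau}$, which is exactly the assertion. I do not expect any genuine obstacle here, as this is a standard discrete Gr\"onwall argument; the only point that deserves a moment's care is the passage from $(1-b\tau)\,y_n\le y_{n-1}$ to the exponential factor, and this is precisely where the smallness condition $\tau\le\tfrac{1}{2b}$ enters, both to keep $1-b\tau$ bounded away from zero and to apply the inequality $\tfrac{1}{1-x}\le e^{2x}$.
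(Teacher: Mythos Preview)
Your argument is correct and is exactly the standard discrete Gr\"onwall computation: the introduction of $y_n$, the one-step recursion $(1-b\tau)y_n\le y_{n-1}$, and the use of $\frac{1}{1-x}\le e^{2x}$ for $0\le x\le \tfrac12$ are all sound. The paper itself supplies no proof of this lemma---it is quoted verbatim from \cite{HeywoodRannacher1990}---so there is nothing to compare against; your write-up would serve perfectly well as a self-contained justification.
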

\begin{lemma}\label{lemma5}
\text{(\cite{Guan2022})} Suppose that $\mathcal{V}$ is a normed linear space with the norm  $\|\cdot\|_{\mathcal{V}}$, and $v^0, v^1, \ldots, v^N \in \mathcal{V}$. Then, we have 
\begin{align}
	\|v^n\|_{\mathcal{V}} \leq (1+2\theta) \sum_{m=1}^{n} \left\|v^{m-\theta}\right\|_{\mathcal{V}} + 2\theta \|v^0\|_{\mathcal{V}}, \quad 1 \leq n \leq N.\label{2507302329}	
\end{align}
\end{lemma}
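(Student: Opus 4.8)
The plan is to invert the defining relation $v^{m-\theta} = (1-\theta)v^{m} + \theta v^{m-1}$ into a recursion for $v^{m}$ and then collapse it. Since $\theta \in [0,1/2]$ forces $1-\theta \geq 1/2 > 0$, I can write
\begin{align*}
v^{m} = \frac{1}{1-\theta}\,v^{m-\theta} - \frac{\theta}{1-\theta}\,v^{m-1},\qquad 1 \leq m \leq n,
\end{align*}
and unroll this two-term recursion all the way down to $v^{0}$. Setting $q := \theta/(1-\theta)$, this produces the closed form
\begin{align*}
v^{n} = \frac{1}{1-\theta}\sum_{m=1}^{n}(-q)^{\,n-m}\,v^{m-\theta} + (-q)^{n}\,v^{0},
\end{align*}
which I would justify either by a one-line induction on $n$ or by direct substitution. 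Alternatively, the whole lemma can be obtained by induction on $n$ without ever writing the closed form, applying the recursion for $v^{m}$ together with the inductive hypothesis to $v^{n-1}$; the constant bookkeeping is then the same.

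The next step is to take $\|\cdot\|_{\mathcal{V}}$ on both sides, apply the triangle inequality, and control the geometric weights. Because $\theta \in [0,1/2]$ gives $q \in [0,1]$, every factor $q^{\,n-m}$ with $1 \leq m \leq n$ is bounded by $1$, and $q^{n} \leq q$ for $n \geq 1$; hence
\begin{align*}
\|v^{n}\|_{\mathcal{V}} \leq \frac{1}{1-\theta}\sum_{m=1}^{n}\|v^{m-\theta}\|_{\mathcal{V}} + \frac{\theta}{1-\theta}\,\|v^{0}\|_{\mathcal{V}}.
\end{align*}
Finally I would enlarge the two constants to those in the statement via the elementary estimates $\frac{1}{1-\theta} \leq 1+2\theta$ and $\frac{\theta}{1-\theta} \leq 2\theta$, each of which is equivalent to $\theta(1-2\theta) \geq 0$ and therefore holds on $[0,1/2]$ (with equality at $\theta = 1/2$, the Crank-Nicolson case). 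Substituting these bounds yields \eqref{2507302329} and completes the proof.

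This argument is essentially routine; there is no genuine obstacle. The only places that need a little care are getting the signs and the shifted index right when collapsing the recursion, and checking that the coefficients in the closed form are genuinely dominated by $1$ on the admissible range $\theta \in [0,1/2]$: if one allowed $\theta$ slightly larger than $1/2$ the ratio $q$ would exceed $1$ and the geometric sum would no longer be uniformly bounded, so the hypothesis $\theta \leq 1/2$ is used in an essential way.
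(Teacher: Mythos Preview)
Your argument is correct. The closed-form identity
\[
v^{n} = \frac{1}{1-\theta}\sum_{m=1}^{n}(-q)^{\,n-m}\,v^{m-\theta} + (-q)^{n}\,v^{0},\qquad q=\frac{\theta}{1-\theta},
\]
follows by the one-line induction you describe, and the subsequent norm bound together with the elementary inequalities $\frac{1}{1-\theta}\leq 1+2\theta$ and $\frac{\theta}{1-\theta}\leq 2\theta$ on $[0,1/2]$ gives exactly \eqref{2507302329}.

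Note that the paper itself does not supply a proof of this lemma: it is simply quoted from \cite{Guan2022}. So there is no ``paper's proof'' to compare against; your derivation stands on its own and is in fact slightly sharper, since it shows the constants $\frac{1}{1-\theta}$ and $\frac{\theta}{1-\theta}$ already suffice before relaxing them to $1+2\theta$ and $2\theta$.
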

\section{Theoretical analysis of the weighted IMEX discontinuous Galerkin method} \label{section3}
\subsection{The stability of the fully discrete numerical solution}  
\begin{theorem}\label{theorem1}
Suppose that $u_{h}^{n}$ is the solution of the numerical scheme \eqref{07271656}-\eqref{07271841}, when the time stepsize $\tau$ satisfies $\max\{0,\gamma\}\tau\leq \frac{1}{16}$, we have    
\begin{align}
\|u_{h}^{n}\|\leq C_1 \|u_{h}^{0}\|,\quad 1\leq n \leq N,
\end{align}
where
$$
C_1=\left(\exp\left(32\max\{\gamma,0\}T\right) \left(24+\frac{128}{7}\max\{\gamma,0\}\right)\right)^{1/2}.
$$
\begin{proof}
(I) Setting $v_h=u_h^{1}$ in \eqref{07271841}, it holds that 
\begin{align}
	\bigg(\frac{u_h^{1}-u_h^0}{\tau},u_h^{1}\bigg)+(\nu + \mathrm{i}\alpha)\|u_h^{1}\|_{a_h}^2+(\kappa + \mathrm{i}\beta)(\vert u_h^{0} \vert^2 u_h^{1},u_h^{1})
	-\gamma \|u_h^{1}\|^2=0.\label{07311033}
\end{align}
Noticing the fact that 
\begin{align}
\text{Re}\left(\frac{u_h^{1}-u_h^0}{\tau},u_h^{1}\right) &= \frac{1}{2\tau}\left(\|u_{h}^{1}\|^2-\|u_{h}^{0}\|^2+\|u_{h}^{1}-u_{h}^{0}\|^2\right)\notag\\
&\geq \frac{1}{2\tau}\left(\|u_{h}^{1}\|^2-\|u_{h}^{0}\|^2\right), \notag 
\end{align}
and taking the real parts on both the right and left hand sides of \eqref{07311033}, we have
\begin{align}
(1-2\gamma\tau)\|u_{h}^{1}\|^2\leq \|u_{h}^{0}\|^2. \notag 
\end{align}
Obviously, when $\gamma\leq0$, it follows that 
\begin{align}
\|u_{h}^{1}\|\leq \|u_{h}^{0}\|\leq C_1 \|u_{h}^{0}\|.\label{2507311515}
\end{align}
When $\gamma>0$ and $\tau\leq \frac{1}{16\gamma}$, it holds that 
\begin{align}
\|u_{h}^{1}\|\leq \sqrt{\frac{1}{1-2\gamma\tau}}\|u_{h}^{0}\|\leq \sqrt{\frac{8}{7}}\|u_{h}^{0}\|\leq C_1 \|u_{h}^{0}\|^2.\label{2507311518}
\end{align}
(II) Taking $v_h=u_h^{n-\theta}$ in \eqref{07271656} and considering the real parts of both sides of the equation yield that 
\begin{align}
	&\text{Re}\left(D_\tau u_{h}^{n-\theta},u_h^{n-\theta}\right)+\nu \|u_h^{n-\theta}\|^2_{a_h}+\kappa\left(\vert\hat{u}_h^{n-\theta}\vert^2u_h^{n-\theta},u_h^{n-\theta}\right)-\gamma \|u_h^{n-\theta}\|^2=0.\label{07311555}
\end{align}
Utilizing Lemma \ref{lemma1}, it follows that 
\begin{align}
\frac{1}{4\tau}\left(F^n-F^{n-1}\right) \leq \gamma\|u_h^{n-\theta}\|^2,\quad 2\leq n \leq N, \label{2507311309}
\end{align}
where 
\begin{align}
F^{n} &= (3-2\theta)\|u_h^{n}\|^2 - (1-2\theta)\|u_h^{n-1}\|^2 + (2-\theta)(1-2\theta)\|u_h^{n}-u_h^{n-1}\|^2\notag\\
&\geq \frac{1}{1-\theta} \|u_h^{n}\|^2, \quad 2 \leq n \leq N.\label{2507311341}
\end{align}
Replacing $n$ by $m$ and summing up for $m$ from $2$ to $n$ on both sides of \eqref{2507311309} and employing \eqref{2507311341}, we arrive at
\begin{align}
\|u_h^{n}\|^2\leq (1-\theta)F^n\leq F^n &\leq F^1+4\gamma
\tau\sum\limits_{m=2}^{n}\|u_h^{m-\theta}\|^2,\quad 2 \leq n \leq N.
\end{align}
Noticing 
\begin{align}
F^{1} &= (3-2\theta)\|u_h^{1}\|^2 - (1-2\theta)\|u_h^{0}\|^2 + (2-\theta)(1-2\theta)\|u_h^{1}-u_h^{0}\|^2\notag\\
&\leq 3\|u_h^{1}\|^2+2\|u_h^{1}-u_h^{0}\|^2\notag\\
&\leq 7\|u_h^{1}\|^2+4\|u_h^{0}\|^2\notag\\
&\leq 12\|u_h^{0}\|^2.
\end{align}
We have 
\begin{align}
	\|u_h^{n}\|^2\leq 12\|u_h^{0}\|^2+4\gamma
	\tau\sum\limits_{m=2}^{n}\|u_h^{m-\theta}\|^2,\quad 2 \leq n \leq N.\label{202508282115}
\end{align}
When $\gamma\leq 0$, it follows that 
\begin{align}
\|u_h^{n}\|^2\leq 12\|u_h^{0}\|^2.\label{08282210}
\end{align}
When $\gamma > 0$, one can easily obtain from \eqref{202508282115} that 
\begin{align}
	\|u_h^{n}\|^2&\leq 12\|u_h^{0}\|^2+4\gamma
	\tau\sum\limits_{m=2}^{n}\|u_h^{m-\theta}\|^2\label{202508282117}\notag\\
    &\leq 12\|u_h^{0}\|^2+4\gamma\tau\sum\limits_{m=2}^{n}\|(1-\theta) u_h^{m} + \theta u_h^{m-1}\|^2\notag\\
    &\leq 12\|u_h^{0}\|^2+8\gamma\tau\sum\limits_{m=2}^{n}\left(\|(1-\theta) u_h^{m}\|^2 + \|\theta u_h^{m-1}\|^2\right)\notag\\
    &\leq 12\|u_h^{0}\|^2+8\gamma\tau\sum\limits_{m=2}^{n}\left(\|u_h^{m}\|^2 + \| u_h^{m-1}\|^2\right)\notag\\
    &\leq 12\|u_h^{0}\|^2+8\gamma\tau\|u_h^{n}\|^2+16\gamma\tau\sum\limits_{m=2}^{n-1}\|u_h^{m}\|^2+8\gamma\tau\|u_h^{1}\|^2, \quad 2 \leq n \leq N.
\end{align}
That is to say 
\begin{align}
\left(1-8\gamma\tau\right) \|u_h^{n}\|^2 \leq \left(12+\frac{64}{7}\gamma\right)\|u_h^{0}\|^2+16\gamma\tau\sum\limits_{m=2}^{n-1}\|u_h^{m}\|^2, \quad 2 \leq n \leq N.
\end{align}
When $\tau\leq \frac{1}{16\gamma}$, we have 
\begin{align}
\|u_h^{n}\|^2\leq \left(24+\frac{128}{7}\gamma\right)\|u_h^{0}\|^2+32\gamma\tau\sum\limits_{m=2}^{n-1}\|u_h^{m}\|^2, \quad 2 \leq n \leq N.\label{08282211}
\end{align}
With the help of estimates \eqref{08282210} and \eqref{08282211}, we have 
 \begin{align}
 	\|u_h^{n}\|^2\leq \left(24+\frac{128}{7}\max\{\gamma,0\}\right)\|u_h^{0}\|^2+32\max\{\gamma,0\}\tau\sum\limits_{m=2}^{n-1}\|u_h^{m}\|^2, \quad 2 \leq n \leq N.
 \end{align}
By virtue of the discrete Gronwall's inequality given in Lemma \ref{lemma2},   
we have 
\begin{align}
\|u_h^{n}\|^2\leq\exp\left(32\max\{\gamma,0\}T\right) \left(24+\frac{128}{7}\max\{\gamma,0\}\right)\|u_h^{0}\|^2\leq C_1^2\|u_h^{0}\|^2
,\quad 2 \leq n \leq N. \label{2507311522}
\end{align}
All this completes the proof. 
\end{proof}
\end{theorem}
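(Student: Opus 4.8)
The plan is a standard energy argument: test the scheme with the natural trial function, pass to real parts, use the sign conditions $\nu>0$ and $\kappa>0$ to discard the diffusion and cubic contributions, and close the resulting recursion with the discrete Gronwall inequality of Lemma~\ref{lemma4}. Since \eqref{07271656} is a three-level BDF2--$\theta$ scheme, the quantity to propagate is not $\|u_h^n\|^2$ itself but the quadratic energy furnished by Lemma~\ref{lemma1} (which I denote $F^n$), and the whole argument is organized around it. Throughout, the restriction $\theta\in[0,\tfrac12]$ is what keeps all the coefficients appearing in $F^n$ and $F^1$ under control.

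First I would dispose of the starter step. Testing the backward Euler equation \eqref{07271841} with $v_h=u_h^1$ and taking real parts, the cubic term contributes $\kappa\,\big\||u_h^0|\,u_h^1\big\|^2\ge 0$ and the diffusion term contributes $\nu\|u_h^1\|_{a_h}^2\ge 0$, both of which may be dropped, while $\mathrm{Re}\,(u_h^1-u_h^0,u_h^1)\ge\tfrac12\big(\|u_h^1\|^2-\|u_h^0\|^2\big)$. This yields $(1-2\gamma\tau)\|u_h^1\|^2\le\|u_h^0\|^2$; for $\gamma\le 0$ this is immediate, and for $\gamma>0$ the hypothesis $\gamma\tau\le\tfrac1{16}$ forces $1-2\gamma\tau\ge\tfrac78$, so in all cases $\|u_h^1\|^2\le\tfrac87\|u_h^0\|^2$.

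Next, for $n\ge 2$ I would take $v_h=u_h^{n-\theta}$ in \eqref{07271656} and pass to real parts. As above, $\mathrm{Re}\,(\kappa+\mathrm{i}\beta)\big(|\hat u_h^{n-\theta}|^2u_h^{n-\theta},u_h^{n-\theta}\big)=\kappa\big\||\hat u_h^{n-\theta}|\,u_h^{n-\theta}\big\|^2\ge 0$ and $\nu\|u_h^{n-\theta}\|_{a_h}^2\ge 0$ drop out --- this is exactly why the extrapolated (explicit) treatment of the nonlinearity causes no trouble for $L^2$ stability. Applying Lemma~\ref{lemma1} with $\mathcal V=L^2(\Omega)$ to the difference quotient gives $F^n-F^{n-1}\le 4\gamma\tau\|u_h^{n-\theta}\|^2$; summing from $m=2$ to $n$ and using the lower bound $F^n\ge\tfrac1{1-\theta}\|u_h^n\|^2\ge\|u_h^n\|^2$ yields
\[
\|u_h^n\|^2\le F^1+4\gamma\tau\sum_{m=2}^n\|u_h^{m-\theta}\|^2 .
\]
Bounding the coefficients in $F^1$ and inserting the starter-step estimate gives $F^1\le 7\|u_h^1\|^2+4\|u_h^0\|^2\le 12\|u_h^0\|^2$.

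Finally I would close the estimate. If $\gamma\le 0$ the last sum has a favourable sign and $\|u_h^n\|^2\le 12\|u_h^0\|^2$. If $\gamma>0$, I bound $\|u_h^{m-\theta}\|^2=\|(1-\theta)u_h^m+\theta u_h^{m-1}\|^2\le 2\big(\|u_h^m\|^2+\|u_h^{m-1}\|^2\big)$, re-index the double sum, move the $8\gamma\tau\|u_h^n\|^2$ term to the left, and use $\gamma\tau\le\tfrac1{16}$ (so $1-8\gamma\tau\ge\tfrac12$) to divide through; this leaves an inequality of the form $\|u_h^n\|^2\le C\|u_h^0\|^2+32\gamma\tau\sum_{m=2}^{n-1}\|u_h^m\|^2$, and the discrete Gronwall inequality (Lemma~\ref{lemma4}) then produces the explicit constant $C_1$. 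Merging the two cases through $\max\{\gamma,0\}$ completes the argument. There is no genuine obstacle here --- the mechanism is entirely classical --- and the only delicate part is the bookkeeping: tracking the constants through the multi-step energy $F^n$, feeding the separate $u_h^1$ estimate into the bound for $F^1$, and checking that the single threshold $\max\{0,\gamma\}\tau\le\tfrac1{16}$ simultaneously controls $1-2\gamma\tau$ and $1-8\gamma\tau$ from below and supplies the Gronwall step, so that everything collapses into the stated $C_1$.
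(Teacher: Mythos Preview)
Your proposal is correct and follows essentially the same approach as the paper: the same test functions $u_h^1$ and $u_h^{n-\theta}$, the same sign-dropping of the $\nu$- and $\kappa$-terms, the same use of Lemma~\ref{lemma1} and its energy $F^n$, the same bound $F^1\le 7\|u_h^1\|^2+4\|u_h^0\|^2\le 12\|u_h^0\|^2$, the same splitting $\|u_h^{m-\theta}\|^2\le 2(\|u_h^m\|^2+\|u_h^{m-1}\|^2)$, and the same absorption-plus-Gronwall closing argument with the identical constants. Your remark that the single threshold $\max\{0,\gamma\}\tau\le\tfrac1{16}$ simultaneously handles $1-2\gamma\tau\ge\tfrac78$, $1-8\gamma\tau\ge\tfrac12$, and the Gronwall step is exactly the bookkeeping the paper carries out.
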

\subsection{The convergence of the fully discrete numerical scheme} 
\begin{theorem}\label{lemma3.1}
Let $u_h^1$ and $u(t_{1})$ be the solutions of the problem \eqref{202507262339} and the fully discrete weighted IMEX discontinuous Galerkin scheme \eqref{07271841}, respectively.  Denote 
\begin{align}	
&\eta_h^1=R_hu^{1}-u_h^1,\quad \eta_h^0=R_hu^{0}-u_h^0=0,\notag\\
&\xi^1=u^1-R_hu^{1},\quad \xi^0=u^0-R_hu^{0}\notag.
\end{align}
Then, when $\max\{0,\gamma\}\tau\leq \frac{1}{4}$, there exists a positive constant $C_2$ such that 
\begin{align}
\|\eta_h^1\|+\tau\|\eta_h^1\|_{\text{DG}}\leq C_2\left(\tau^2
+h^{k+1}\right).\label{2508291009}
\end{align}
\end{theorem}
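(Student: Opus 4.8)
The plan is to derive a one-step error identity for $\eta_h^1$ and close it by a short energy argument. Since the exact solution is assumed smooth enough (say $u\in C^1([0,T];H^{k+1}(\Omega))$ with $u_{tt}\in L^\infty((0,T);L^2(\Omega))$) and the SIPG form $a_h$ is consistent, $u(t_1)\in H^2(\Omega)\cap H^1_0(\Omega)$ satisfies the discrete weak equation $(u_t(t_1),v_h)+(\nu+\mathrm{i}\alpha)a_h(u(t_1),v_h)+(\kappa+\mathrm{i}\beta)(|u(t_1)|^2u(t_1),v_h)-\gamma(u(t_1),v_h)=0$ for every $v_h\in V_h^k$ (all edge terms in $a_h(u(t_1),v_h)$ that involve the jump $[u(t_1)]$ vanish, and the remaining broken-gradient terms reduce to $-(\Delta u(t_1),v_h)$). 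Subtracting \eqref{07271841}, writing $u(t_1)-u_h^1=\xi^1+\eta_h^1$ and $u^0-u_h^0=\xi^0$ (recall $\eta_h^0=0$), introducing the time truncation error $R_\tau:=u_t(t_1)-\tau^{-1}(u^1-u^0)$, and using the defining orthogonality $a_h(\xi^1,v_h)=a_h(u^1-R_hu^1,v_h)=0$ of the elliptic projection \eqref{2507271753}, one obtains
\begin{align}
\Big(\tfrac{\eta_h^1}{\tau},v_h\Big)+(\nu+\mathrm{i}\alpha)a_h(\eta_h^1,v_h)-\gamma(\eta_h^1,v_h)
=-(R_\tau,v_h)-\Big(\tfrac{\xi^1-\xi^0}{\tau},v_h\Big)-(\kappa+\mathrm{i}\beta)(G,v_h)+\gamma(\xi^1,v_h),\notag
\end{align}
where $G:=|u(t_1)|^2u(t_1)-|u_h^0|^2u_h^1$.

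Next I would take $v_h=\eta_h^1$ and retain only real parts, so that the left-hand side becomes $\tfrac1\tau\|\eta_h^1\|^2+\nu\|\eta_h^1\|_{a_h}^2-\gamma\|\eta_h^1\|^2$ (the $\alpha$- and $\beta$-contributions are purely imaginary). On the right-hand side, Taylor's formula with integral remainder gives $\|R_\tau\|\le C\tau$; since $\xi^1-\xi^0=(I-R_h)(u^1-u^0)$, estimate \eqref{2507281415} yields $\tfrac1\tau\|\xi^1-\xi^0\|\le C\tau^{-1}h^{k+1}\|u^1-u^0\|_{k+1}\le Ch^{k+1}$; and $|\gamma|\,\|\xi^1\|\le Ch^{k+1}$. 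The only delicate term is $G$, which I would split as $G=G_1+|u_h^0|^2\eta_h^1$ with $G_1:=|u(t_1)|^2u(t_1)-|u_h^0|^2R_hu^1$. Then $\mathrm{Re}\big[(\kappa+\mathrm{i}\beta)(|u_h^0|^2\eta_h^1,\eta_h^1)\big]=\kappa\int_\Omega|u_h^0|^2|\eta_h^1|^2\ge 0$, so this piece carries the favourable sign and is moved to the left-hand side (then discarded in the lower bound). Writing $G_1=|u(t_1)|^2\xi^1+(|u(t_1)|^2-|u^0|^2)R_hu^1+(|u^0|^2-|R_hu^0|^2)R_hu^1$ and invoking $\|u(t_1)\|_{0,\infty}+\|R_hu^1\|_{0,\infty}\le C$ from Lemma \ref{lemma2}, the Lipschitz-in-time bound on $|u|^2$, estimate \eqref{2507281415}, and $\||u^0|^2-|R_hu^0|^2\|\le(\|u^0\|_{0,\infty}+\|R_hu^0\|_{0,\infty})\|\xi^0\|$, one gets $\|G_1\|\le C(\tau+h^{k+1})$. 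Collecting the bounds,
\begin{align}
\tfrac1\tau\|\eta_h^1\|^2+\nu\|\eta_h^1\|_{a_h}^2-\gamma\|\eta_h^1\|^2\le C(\tau+h^{k+1})\|\eta_h^1\|.\notag
\end{align}

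Finally, Young's inequality bounds the right-hand side by $\tfrac1{2\tau}\|\eta_h^1\|^2+C\tau(\tau+h^{k+1})^2$; if $\gamma\le 0$ the term $-\gamma\|\eta_h^1\|^2$ is nonnegative, while if $\gamma>0$ the hypothesis $\gamma\tau\le\tfrac14$ gives $\tfrac1{2\tau}-\gamma\ge\tfrac1{4\tau}$, so in both cases $\tfrac1{4\tau}\|\eta_h^1\|^2+\nu\|\eta_h^1\|_{a_h}^2\le C\tau(\tau+h^{k+1})^2$. Hence $\|\eta_h^1\|\le C\tau(\tau+h^{k+1})\le C(\tau^2+h^{k+1})$, and, by the coercivity in \eqref{2508251529}, $\|\eta_h^1\|_{\text{DG}}\le C\tau^{1/2}(\tau+h^{k+1})$, so that $\tau\|\eta_h^1\|_{\text{DG}}\le C\tau^{3/2}(\tau+h^{k+1})\le C(\tau^2+h^{k+1})$ for $\tau\le 1$; adding the two estimates gives \eqref{2508291009}. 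I expect the main obstacle to be the treatment of $G$: one must avoid any circular appeal to an a priori bound on $u_h^1$, which is why the cubic term is routed through $R_hu^1$, the dissipative sign of the $\kappa$-term is exploited, and the $L^\infty$-stability of the elliptic projection (Lemma \ref{lemma2}) is used in place of an inverse estimate.
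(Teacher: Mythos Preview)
Your proof is correct and follows essentially the same route as the paper: derive the one-step error equation, test with $\eta_h^1$, take real parts, exploit the favourable sign of $\kappa\int|u_h^0|^2|\eta_h^1|^2$, and bound the remaining pieces by Taylor expansion, the Ritz-projection estimate \eqref{2507281415}, and the $L^\infty$-stability of Lemma~\ref{lemma2}. The only organisational difference is that the paper first inserts $|u^0|^2u^1$ into the continuous equation at $t_1$ (so the time-consistency error of the nonlinearity appears as a separate truncation term), multiplies the error identity by $\tau$ before applying Young's inequality, and arrives directly at $\|\eta_h^1\|^2+\tau\|\eta_h^1\|_{\text{DG}}^2\le C(\tau^2+h^{k+1})^2$; you instead keep the $\tau^{-1}$ on the left and weight Young accordingly, which yields the slightly sharper intermediate bound $\|\eta_h^1\|\le C\tau(\tau+h^{k+1})$ but leads to the same conclusion.
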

\begin{proof}
At $t=t_{1}$, we have from \eqref{202507262339} that 
\begin{align}
	&\quad \left(\frac{u^{1}-u^0}{\tau},v_{h}\right)+(\nu + \mathrm{i}\alpha)a_h(u^{1}, v_h)+(\kappa + \mathrm{i}\beta)(\vert u^{0} \vert^2 u^{1},v_h)
	-\gamma (u^{1},v_h)\notag\\
	&=\left(\frac{u^{1}-u^0}{\tau}-u_t(t_1),v_{h}\right)+(\kappa + \mathrm{i}\beta)\left(\vert u^{0} \vert^2 u^{1}-|u^1|^2u^1,v_h\right),\quad \forall ~v_h\in V_h^k.\label{07311659}
\end{align}
Subtracting \eqref{07271841} from \eqref{07311659}, it holds that 
\begin{align}
&\quad \left(\frac{\eta_h^{1}}{\tau},v_{h}\right)+(\nu + \mathrm{i}\alpha)a_h(\eta_h^{1}, v_h)+(\kappa + \mathrm{i}\beta)(\vert u^{0} \vert^2 u^{1}-\vert u_h^{0} \vert^2 u_h^{1},v_h)
-\gamma (\eta_h^{1},v_h)\notag\\
&=\left(\frac{u^{1}-u^0}{\tau}-u_t(t_1),v_{h}\right)+(\kappa + \mathrm{i}\beta)\left(\vert u^{0} \vert^2 u^{1}-|u^1|^2u^1,v_h\right)-\left(\frac{\xi^{1}-\xi^0}{\tau},v_{h}\right)+\gamma(\xi^{1},v_h).\label{07311745}	
\end{align}
Taking $v_h=\eta_h^{1}$, multiplying both sides of the equation by $\tau$ and considering the real parts of both sides of the equation yield that 
\begin{align}
&\|\eta_h^{1}\|^2+\nu\tau\|\eta_h^{1}\|_{a_h}^2-\gamma \tau\|\eta_h^{1}\|^{2}=-\tau\text{Re}\left((\kappa + \mathrm{i}\beta)(\vert u^{0} \vert^2 u^{1}-\vert u_h^{1} \vert^2 u_h^{1},\eta_h^{1})\right)+\tau\text{Re}\left(\frac{u^{1}-u^0}{\tau}-u_t(t_1),\eta_h^{1}\right)\notag\\
&\quad+\tau\text{Re}\left((\kappa + \mathrm{i}\beta)(\vert u^{0} \vert^2 u^{1}-|u^1|^2u^1,\eta_h^{1})\right)-\tau\text{Re}\left(\frac{\xi^{1}-\xi^0}{\tau},\eta_h^{1}\right)+\gamma\tau\text{Re}\left(\xi^{1},\eta_h^{1}\right)=:\sum\limits_{i=1}^5A_{i}\label{2508011807}.
\end{align}
Now, we estimate every term in the right-hand side of Equation \eqref{2508011807}. As for $A_1$, it follows that 
\begin{align}
A_1 &= -\tau\text{Re}\left((\kappa + \mathrm{i}\beta)(\vert u^{0} \vert^2 u^{1}-\vert u_h^{0} \vert^2 u_h^1-\eta_h^1,\eta_h^{1})\right)\notag\\
&= -\tau\text{Re}\left(\left(\kappa + \mathrm{i}\beta\right)(\vert u^{0} \vert^2 \left(\xi^1+R_hu^1\right)-\vert u_h^{0} \vert^2 \left(R_hu^1-\eta_h^1\right),\eta_h^{1})\right)\notag\\
& = -\tau\text{Re}\left(\left(\kappa + \mathrm{i}\beta\right)\left(\left(\vert u^{0} \vert^2-\vert u_h^{0} \vert^2\right) R_hu^1+\vert u_h^{0} \vert^2\xi^1,\eta_h^{1}\right)\right)-\tau\kappa\left(\vert u_h^{0} \vert^2\eta_h^1,\eta_h^1\right)\notag\\
&\leq -\tau\text{Re}\left(\left(\kappa + \mathrm{i}\beta\right)\left(\left(\vert u^{0} \vert^2-\vert u_h^{0} \vert^2\right) R_hu^1+\vert u_h^{0} \vert^2\xi^1,\eta_h^{1}\right)\right)\notag\\
& \leq \frac{1}{6}\|\eta_h^{1}\|^2+C\|\xi^1\|^2+\|\xi^0\|^2\notag\\
& \leq \frac{1}{6}\|\eta_h^{1}\|^2+Ch^{2k+2},\label{202508251219}
\end{align}
where we have used the fact that 
\begin{align}
\vert u^{0} \vert^2-\vert u_h^{0} \vert^2=\text{Re}\left(\left(u^{0}-u_h^{0}\right)\left(u^{0}+u_h^{0}\right)^*\right),\notag
\end{align}
where the symbol $*$ denotes the complex conjugate operation on a complex number.
By the Taylor expansions with the integral remainder, it holds that 
\begin{align}
A_2+A_3 &=\tau\text{Re}\left(\frac{u^{1}-u^0}{\tau}-u_t(t_1),\eta_h^{1}\right)+\tau\text{Re}\left((\kappa + \mathrm{i}\beta)(\vert u^{0} \vert^2 u^{1}-|u^1|^2u^1,\eta_h^{1})\right)\notag\\
&\leq C\tau\left\|\frac{u^{1}-u^0}{\tau}-u_t(t_1)\right\|\cdot\|\eta_h^{1}\|+C\tau \left\| \vert u^{0} \vert^2 u^{1}-|u^1|^2u^1 \right\|\cdot\|\eta_h^{1}\|\notag\\
&\leq C\tau^2 \left\|\frac{u^{1}-u^0}{\tau}-u_t(t_1)\right\|^2+C\tau^2\left\| \vert u^{0} \vert^2 u^{1}-|u^1|^2u^1 \right\|^2+\frac{1}{6}\|\eta_h^{1}\|^2\notag\\
&\leq C\tau^4+\frac{1}{6}\|\eta_h^{1}\|^2.\label{202508251220}
\end{align}
In the last, by virtue of the Cauchy-Schwartz inequality, it arrives at 
\begin{align}
A_4+A_5&=-\tau\text{Re}\left(\frac{\xi^{1}-\xi^0}{\tau},\eta_h^{1}\right)+\gamma\tau\text{Re}\left(\xi^{1},\eta_h^{1}\right)\notag\\
&\leq C\left\|\frac{\xi^{1}-\xi^0}{\tau}\right\|^2+C\|\xi^1\|^2+\frac{1}{6}\|\eta_h^{1}\|^2\notag\\
&\leq Ch^{2k+2}+\frac{1}{6}\|\eta_h^{1}\|^2. \label{202508251221}
\end{align} 
Combining the bounds above and substituting \eqref{202508251219}-\eqref{202508251221} into \eqref{2508011807}, we have for sufficiently small $\tau$
\begin{align}
\frac{1}{2}\|\eta_h^{1}\|^2+\nu\tau\|\eta_h^{1}\|_{a_h}^2\leq C\tau^4+Ch^{2k+2}+\gamma\tau\|\eta_h^{1}\|^2.
\end{align}
When $\gamma\leq0$, noting the norm equivalence in \eqref{2508251529}, we have 
\begin{align}
\|\eta_h^{1}\|^2+\tau\|\eta_h^{1}\|_{\text{DG}}^2\leq c_{1}\left(\tau^2+h^{k+1}\right)^2.
\end{align}
When $\gamma>0$ and $\tau\gamma\leq\frac{1}{4}$, it holds that 
\begin{align}
	\frac{1}{4}\|\eta_h^{1}\|^2+\tau\|\eta_h^{1}\|_{\text{DG}}^2\leq\left(\frac{1}{2}-\gamma\tau\right)\|\eta_h^{1}\|^2+\tau\|\eta_h^{1}\|_{\text{DG}}^2\leq C\left(\tau^2+h^{k+1}\right)^2,
\end{align}
i.e., 
\begin{align}
	\|\eta_h^{1}\|^2+\tau\|\eta_h^{1}\|_{\text{DG}}^2\leq c_{2}\left(\tau^2+h^{k+1}\right)^2.
\end{align}
Choosing $C_2=\max(c_1,c_2)$ imply the truth of inequality \eqref{2508291009}. 
\end{proof}

\begin{theorem}
Suppose that $u_h^n$ and $u(t_{n})$ be the solutions of the continuous problem \eqref{202507262339} and the fully discrete numerical scheme \eqref{07271656}-\eqref{07271841}, respectively, then there exist two positive constants $\tau_1$ and $h_{1}$ such that when $\tau\leq\tau_1$ and $h\leq h_1$, we have 
\begin{align}
\|\eta_h^n\|+\tau\|\eta_h^n\|_{\text{DG}}\leq C_3\left(\tau^2
+h^{k+1}\right), \quad 0 \leq n \leq N,\label{2508292249}
\end{align}
where $$C_3=\max\{C_2,c_3\},\quad \eta_h^n=R_hu^{n}-u_h^n,\quad \xi^n=u^n-R_hu^{n},\quad 2 \leq n \leq N,$$
and $c_3$ is given in \eqref{95056}.
Moreover, with the help of triangle inequality, from \eqref{2508292249}, we can immediately obtain the optimal error estimate in $L^2$-norm.
\begin{align}
\|u^n-u_{h}^{n}\|\leq C(\tau^2+h^{k+1}),\quad 0 \leq n \leq N.
\end{align}
\end{theorem}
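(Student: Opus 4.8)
The plan is to estimate the projection-error component $\eta_h^n=R_hu^n-u_h^n$ by a discrete energy argument and to close the bound by induction on $n$; the one genuinely delicate point is getting the $L^\infty$-type control of the discrete solution that the cubic term requires, and it is here that the dichotomy $\tau^2\le h^{k+1}$ versus $\tau^2>h^{k+1}$ announced in the abstract enters. First I would write down the error equation: evaluating the weak form of \eqref{202507262339} at $t_{n-\theta}$, reconciling $u_t(t_{n-\theta})$ with $D_\tau u^{n-\theta}$, $u(t_{n-\theta})$ with $u^{n-\theta}$, and $|u(t_{n-\theta})|^2u(t_{n-\theta})$ with $|\hat{u}^{n-\theta}|^2u^{n-\theta}$, then subtracting \eqref{07271656} and using $a_h(R_hu,v_h)=a_h(u,v_h)$ together with $u^n=R_hu^n+\xi^n$, $u_h^n=R_hu^n-\eta_h^n$, one obtains, for all $v_h\in V_h^k$,
\begin{align}
(D_\tau\eta_h^{n-\theta},v_h)+(\nu+\mathrm{i}\alpha)\,a_h(\eta_h^{n-\theta},v_h)-\gamma(\eta_h^{n-\theta},v_h)=(\mathcal{R}^n,v_h),\notag
\end{align}
where $\mathcal{R}^n$ gathers: the temporal consistency error (which is $O(\tau^2)$ by Taylor expansion with integral remainder, since $u$ is smooth and all the $\theta$-weighted differences and extrapolations are second order); the projection terms $-D_\tau\xi^{n-\theta}+\gamma\xi^{n-\theta}$ (which are $O(h^{k+1})$ by \eqref{2507281415}, using that $R_h$ commutes with the time difference so $D_\tau\xi^{n-\theta}=(I-R_h)D_\tau u^{n-\theta}$); and the nonlinear difference $(\kappa+\mathrm{i}\beta)\big(|\hat{u}^{n-\theta}|^2u^{n-\theta}-|\hat{u}_h^{n-\theta}|^2u_h^{n-\theta}\big)$.

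I would then argue by induction, the statement being \eqref{2508292249} for all indices $\le n$; the cases $n=0$ ($\eta_h^0=0$) and $n=1$ (Theorem \ref{lemma3.1}) are already at hand. The key preparatory step is to deduce from the hypothesis that $\|u_h^m\|_{0,\infty}\le C$ for $0\le m\le n-1$, hence $\|\hat{u}_h^{n-\theta}\|_{0,\infty}\le C$. Since $u_h^m=R_hu^m-\eta_h^m$ and $\|R_hu^m\|_{0,\infty}\le C$ by Lemma \ref{lemma2}, it suffices to make $\|\eta_h^m\|_{0,\infty}$ small, and this is where I use the global inverse inequality \eqref{2507281409}: if $\tau^2\le h^{k+1}$ the hypothesis gives $\|\eta_h^m\|\le 2C_3h^{k+1}$, so $\|\eta_h^m\|_{0,\infty}\le Ch^{-1}\|\eta_h^m\|\le 2CC_3h^{k}\to 0$; if $\tau^2>h^{k+1}$ then $\|\eta_h^m\|\le 2C_3\tau^2$ while $h^{-1}<\tau^{-2/(k+1)}$, so $\|\eta_h^m\|_{0,\infty}\le Ch^{-1}\tau^2<2CC_3\,\tau^{2-2/(k+1)}\to 0$ (this is where $k\ge 1$ is used). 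In both regimes the unavoidable factor $h^{-1}$ is rendered harmless, and this fixes the thresholds $\tau_1,h_1$.

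With this in hand I would take $v_h=\eta_h^{n-\theta}$ in the error equation, multiply by $4\tau$, take real parts, and apply Lemma \ref{lemma1} to the $D_\tau$-term, arriving at $E_\eta^n-E_\eta^{n-1}+4\nu\tau\|\eta_h^{n-\theta}\|_{a_h}^2\le 4\max\{\gamma,0\}\tau\|\eta_h^{n-\theta}\|^2+4\tau\,\text{Re}(\mathcal{R}^n,\eta_h^{n-\theta})$ with $E_\eta^n$ the energy of Lemma \ref{lemma1} applied with $\mathcal{V}=L^2$ (so $\|\eta_h^n\|^2\le E_\eta^n$). The truncation and projection parts of $(\mathcal{R}^n,\eta_h^{n-\theta})$ are dispatched by Cauchy--Schwarz and Young, producing $C(\tau^2+h^{k+1})^2$ after summation plus an absorbable $\|\eta_h^{n-\theta}\|^2$-term. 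For the cubic difference I would split $|\hat{u}^{n-\theta}|^2u^{n-\theta}-|\hat{u}_h^{n-\theta}|^2u_h^{n-\theta}=|\hat{u}^{n-\theta}|^2(\xi^{n-\theta}+\eta_h^{n-\theta})+(|\hat{u}^{n-\theta}|^2-|\hat{u}_h^{n-\theta}|^2)u_h^{n-\theta}$, then write $u_h^{n-\theta}=R_hu^{n-\theta}-\eta_h^{n-\theta}$ and $|\hat{u}^{n-\theta}|^2-|\hat{u}_h^{n-\theta}|^2=\text{Re}\big((\hat{\xi}^{n-\theta}+\hat{\eta}_h^{n-\theta})(\hat{u}^{n-\theta}+\hat{u}_h^{n-\theta})^{*}\big)$, and bound every factor using the smoothness of $u$, Lemma \ref{lemma2}, the projection bound \eqref{2507281415}, the freshly obtained $L^\infty$ bound on $\hat{u}_h^{n-\theta}$, the discrete Ladyzhenskaya inequality \eqref{202509042305} applied to $\eta_h^{n-\theta}$ and to $\hat{\eta}_h^{n-\theta}$, and Young's inequality; the net outcome is a bound of the form $C(\tau^2+h^{k+1})^2+\epsilon\|\eta_h^{n-\theta}\|_{\text{DG}}^2+C(\|\eta_h^{n}\|^2+\|\eta_h^{n-1}\|^2+\|\eta_h^{n-2}\|^2)$. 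Choosing $\epsilon$ small so that the $\text{DG}$-term is absorbed by the coercivity \eqref{2508251529}, summing over $m=2,\dots,n$, bounding $E_\eta^1\le C\|\eta_h^1\|^2\le CC_2^2(\tau^2+h^{k+1})^2$, and --- provided $\tau\le\tau_1$ --- invoking the discrete Gronwall inequality of Lemma \ref{lemma4}, I would obtain
\begin{align}
E_\eta^n+c\,\tau\sum_{m=2}^{n}\|\eta_h^{m-\theta}\|_{\text{DG}}^2\le c_3\big(\tau^2+h^{k+1}\big)^2,\label{95056}
\end{align}
and in particular $\|\eta_h^n\|\le C(\tau^2+h^{k+1})$.

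It remains to recover the $\tau$-weighted $\text{DG}$ part of \eqref{2508292249}, which closes the induction: by the generalized discontinuous Galerkin inverse inequality of Lemma \ref{lemma3}, $\tau\|\eta_h^n\|_{\text{DG}}\le C\tau h^{-1}\|\eta_h^n\|\le C\tau h^{-1}(\tau^2+h^{k+1})$, and one checks once more --- splitting into $\tau^2\le h^{k+1}$ and $\tau^2>h^{k+1}$ and using $k\ge 1$, so that $\tau\le h^{(k+1)/2}\le h$ in the first case and $\tau^{1-2/(k+1)}\le 1$ in the second --- that $\tau h^{-1}(\tau^2+h^{k+1})\le C(\tau^2+h^{k+1})$ in both regimes; combined with the previous line this gives \eqref{2508292249} for index $n$ with $C_3=\max\{C_2,c_3\}$, and the stated $L^2$ error bound then follows from $\|u^n-u_h^n\|\le\|\xi^n\|+\|\eta_h^n\|$ and \eqref{2507281415}. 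The main obstacle throughout is the cubic nonlinearity: controlling it needs an $L^\infty$- (or $L^4$-) type bound on $u_h$ that, in a genuinely mesh-ratio-free analysis, cannot be imposed as a CFL condition and must instead be bootstrapped out of the inductive error bound itself; the case distinction $\tau^2\le h^{k+1}$ versus $\tau^2>h^{k+1}$ is exactly the mechanism that keeps the inverse-inequality factor $h^{-1}$ under control in both regimes, and making the two regimes dovetail --- here as well as in the recovery of the $\text{DG}$ estimate --- is the fussy part of the bookkeeping.
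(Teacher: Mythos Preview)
Your overall strategy --- induction on $n$, error equation, energy estimate via Lemma~\ref{lemma1}, Gronwall --- matches the paper, but the execution of Case~II ($\tau^2>h^{k+1}$) contains a sign error that breaks the argument in two places. From $\tau^2>h^{k+1}$ one gets $h<\tau^{2/(k+1)}$, hence $h^{-1}>\tau^{-2/(k+1)}$, \emph{not} $h^{-1}<\tau^{-2/(k+1)}$ as you write. Consequently your bound $\|\eta_h^m\|_{0,\infty}\le Ch^{-1}\|\eta_h^m\|\le Ch^{-1}\tau^2$ gives no control whatsoever in Case~II (take $h=\tau^{100}$), and for the same reason your recovery step $\tau\|\eta_h^m\|_{\text{DG}}\le C\tau h^{-1}(\tau^2+h^{k+1})$ cannot be closed there either: $\tau h^{-1}$ is unbounded in this regime.

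The paper repairs both places differently. First, the quantity propagated through the induction is $\|\eta_h^n\|_{\text{DG}}\le 1$ (not an $L^\infty$ bound on $u_h$); in Case~II this comes \emph{directly} from the second summand of the inductive hypothesis, $\tau\|\eta_h^n\|_{\text{DG}}\le C_3(\tau^2+h^{k+1})\le 2C_3\tau^2$, i.e.\ $\|\eta_h^n\|_{\text{DG}}\le 2C_3\tau$ --- this is precisely why the term $\tau\|\eta_h^n\|_{\text{DG}}$ is carried in \eqref{2508292249}. The nonlinear estimate then uses $\|\hat\eta_h^{n-\theta}\|_{0,4}^2\le C\|\hat\eta_h^{n-\theta}\|_{\text{DG}}\|\hat\eta_h^{n-\theta}\|\le C\|\hat\eta_h^{n-\theta}\|$ via Ladyzhenskaya, so no $L^\infty$ bound on $u_h$ is ever needed. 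Second, the recovery of $\tau\|\eta_h^m\|_{\text{DG}}$ does not go through the inverse inequality at all: the energy argument produces the accumulated dissipation $\tau\sum_{j=2}^{m}\|\eta_h^{j-\theta}\|_{\text{DG}}^2\le C(\tau^2+h^{k+1})^2$, and Lemma~\ref{lemma5} (the transfer formula) together with Cauchy--Schwarz converts this into $\tau\|\eta_h^m\|_{\text{DG}}\le(1+2\theta)\sqrt{T}\big(\tau\sum_j\|\eta_h^{j-\theta}\|_{\text{DG}}^2\big)^{1/2}\le C(\tau^2+h^{k+1})$, with no factor of $h^{-1}$ to tame.
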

\begin{proof}
In the whole process, the mathematical induction method is employing to prove \eqref{2508292249}. According to the conclusion in Lemma \ref{lemma3.1}, we can easily obtain that the \eqref{2508292249} is true for the case $n=0,1$. Next, we assume that \eqref{2508292249} holds for $n$ from $0$ to $m-1~(m\geq2)$, this is to say 
\begin{align}
	\|\eta_h^n\|+\tau\|\eta_h^n\|_{\text{DG}}\leq C_3\left(\tau^2
	+h^{k+1}\right), \quad 0 \leq n \leq m-1.\label{2508292334}
\end{align}
Below, we will prove the following inequality by dividing it into two cases
\begin{align}
\|\eta_h^n\|_{\text{DG}}\leq 1,\quad 0 \leq n \leq m-1.
\end{align}
$\mathbf{Case~I}: \tau^2\leq h^{k+1}$\\
In this case, noting \eqref{08292320}, we have
\begin{align}
\|\eta_h^n\|_{\text{DG}}\leq Ch^{-1}\|\eta_h^n\|\leq Ch^{k}\leq 1,
\quad 0 \leq n \leq m-1.\end{align}
$\mathbf{Case~II}: \tau^2> h^{k+1}$\\
It follows from \eqref{2508292334} that 
\begin{align}
\|\eta_h^n\|_{\text{DG}}\leq C_{3}\tau\leq 1, \quad 0 \leq n \leq m-1.
\end{align}
With the above preparations, we will next prove that the inequality \eqref{2508292334} also holds when $n = m$. For this purpose, consider the equation \eqref{202507262339} at $t=t_{n-\theta}$ and since the scheme is consistent, we can easily obtain the equation satisfied by the exact solution
\begin{align}
	&\quad(D_\tau u^{n-\theta},v_{h})+(\nu + \mathrm{i}\alpha)a_h(u^{n-\theta}, v_h)+(\kappa + \mathrm{i}\beta)(\vert \hat{u}^{n-\theta} \vert^2 u^{n-\theta},v_h)\notag\\
	&=\gamma (u^{n-\theta},v_h)+\gamma (u(t_{n-\theta})-u^{n-\theta},v_h)+(D_\tau u^{n-\theta}-u(t_{n-\theta}),v_{h})+(\nu + \mathrm{i}\alpha)a_h(u^{n-\theta}-u(t_{n-\theta}), v_h),\notag\\
	&\quad+(\kappa + \mathrm{i}\beta)(\vert \hat{u}^{n-\theta} \vert^2 u^{n-\theta}-\vert u(t_{n-\theta}) \vert^2 u(t_{n-\theta}),v_h),\quad 2\leq n\leq m,\quad \forall ~v_h\in V_h^k. \label{0830004}
\end{align}
Subtracting \eqref{07271656} from \eqref{0830004},  we have the system of error equation
\begin{align}
	&\quad(D_\tau \eta_h^{n-\theta},v_{h})+(\nu + \mathrm{i}\alpha)a_h(\eta_h^{n-\theta}, v_h)+(\kappa + \mathrm{i}\beta)(\vert \hat{u}^{n-\theta} \vert^2 u^{n-\theta}-\vert \hat{u}_h^{n-\theta} \vert^2 u_h^{n-\theta},v_h)\notag\\
	&=\gamma (\eta_h^{n-\theta},v_h)+\gamma (u(t_{n-\theta})-u^{n-\theta},v_h)+(D_\tau u^{n-\theta}-u(t_{n-\theta}),v_{h})+(\nu + \mathrm{i}\alpha)a_h(u^{n-\theta}-u(t_{n-\theta}), v_h)\notag\\
	&\quad+(\kappa + \mathrm{i}\beta)(\vert \hat{u}^{n-\theta} \vert^2 u^{n-\theta}-\vert u(t_{n-\theta}) \vert^2 u(t_{n-\theta}),v_h)\notag\\
	&\quad-(D_\tau \xi^{n-\theta},v_{h})+\gamma (\xi^{n-\theta},v_h),\quad 2\leq n\leq m,\quad \forall ~v_h\in V_h^k, \label{0830007}
\end{align}
where we have used the fact that $(\nu + \mathrm{i}\alpha)a_h(\xi^{n-\theta}, v_h)=0$.
Let $v_h=\eta_h^{n-\theta}$ in \eqref{0830007} and take the real part of both sides of the resulting equation, we have 
\begin{align}
&\quad\text{Re}\left(D_\tau \eta_h^{n-\theta},\eta_h^{n-\theta}\right)+\nu\|\eta_h^{n-\theta}\|_{a_h}^2-\gamma \|\eta_h^{n-\theta}\|^{2} =-\text{Re}\left((\kappa + \mathrm{i}\beta)\left(\vert \hat{u}^{n-\theta} \vert^2 u^{n-\theta}-\vert \hat{u}_h^{n-\theta} \vert^2 u_h^{n-\theta},\eta_h^{n-\theta}\right)\right)\notag\\
&=\gamma\text{Re}(u(t_{n-\theta})-u^{n-\theta},\eta_h^{n-\theta})+\text{Re}(D_\tau u^{n-\theta}-u(t_{n-\theta}),\eta_h^{n-\theta})+\text{Re}\left((\nu + \mathrm{i}\alpha)a_h(u^{n-\theta}-u(t_{n-\theta}), \eta_h^{n-\theta})\right)\notag\\
&\quad+\text{Re}\left((\kappa + \mathrm{i}\beta)(\vert \hat{u}^{n-\theta} \vert^2 u^{n-\theta}-\vert u(t_{n-\theta}) \vert^2 u(t_{n-\theta}),\eta_h^{n-\theta})\right)\notag\\
&\quad-\text{Re}\left(D_\tau \xi^{n-\theta},\eta_h^{n-\theta}\right)+\gamma \text{Re}\left(\xi^{n-\theta},\eta_h^{n-\theta}\right),\quad 2\leq n\leq m. \label{0831007}
\end{align}
Next, we estimate each term in the above equality. For this purpose, noting that 
\begin{align}
\vert \hat{u}^{n-\theta} \vert^2 u^{n-\theta}-\vert \hat{u}_h^{n-\theta} \vert^2 u_h^{n-\theta}&=\vert \hat{u}^{n-\theta} \vert^2\left(\xi^{n-\theta}+R_hu^{n-\theta}\right)-\vert \hat{u}_h^{n-\theta}\vert^2\left(R_hu^{n-\theta}-\eta_h^{n-\theta}\right)\notag\\
&= \left(\vert \hat{u}^{n-\theta} \vert^2-\vert \hat{u}_h^{n-\theta} \vert^2\right)R_hu^{n-\theta}+\vert \hat{u}^{n-\theta} \vert^2\xi^{n-\theta}+\vert\hat{u}_h^{n-\theta}\vert^2\eta_h^{n-\theta}\notag\\
&=\text{Re}\left((\hat{u}^{n-\theta}-\hat{u}_h^{n-\theta})(\hat{u}^{n-\theta}+\hat{u}_h^{n-\theta})^*\right)R_hu^{n-\theta}+\vert \hat{u}^{n-\theta} \vert^2\xi^{n-\theta}+\vert\hat{u}_h^{n-\theta}\vert^2\eta_h^{n-\theta}\notag\\
&=\text{Re}\left((\hat{\eta}_h^{n-\theta}+\hat{\xi}^{n-\theta})(\hat{u}^{n-\theta}+R_h\hat{u}^{n-\theta}-\hat{\eta}_h^{n-\theta})^*\right)R_hu^{n-\theta}\notag\\
&\quad+\vert \hat{u}^{n-\theta} \vert^2\xi^{n-\theta}+\vert\hat{u}_h^{n-\theta}\vert^2\eta_h^{n-\theta},\quad 2\leq n\leq m.
\end{align}
Then, it holds that 
\begin{align}
	&\quad-\text{Re}\left((\kappa + \mathrm{i}\beta)\left(\vert \hat{u}^{n-\theta} \vert^2 u^{n-\theta}-\vert \hat{u}_h^{n-\theta} \vert^2 u_h^{n-\theta},\eta_h^{n-\theta}\right)\right)\notag\\
	&\leq C(\|\hat{\eta}_h^{n-\theta}\|_{0,4}^2\|\eta_h^{n-\theta}\|\|R_hu^{n-\theta}\|_{0,\infty}+\|\hat{\eta}_h^{n-\theta}\|\|\eta_h^{n-\theta}\|\|\hat{u}^{n-\theta}+R_h\hat{u}^{n-\theta}\|_{0,\infty}\notag\\
	&\quad+\|\hat{\xi}^{n-\theta}\|\|\eta_h^{n-\theta}\|\|R_hu^{n-\theta}\|_{0,\infty}\|\hat{u}^{n-\theta}+R_h\hat{u}^{n-\theta}\|_{0,\infty}\notag\\
	&\quad+\|\hat{\xi}^{n-\theta}\|_{0,\infty}\|R_hu^{n-\theta}\|_{0,\infty}\|\eta_h^{n-\theta}\|\|\hat{\eta}_h^{n-\theta}\|+\|\hat{u}^{n-\theta}\|_{0,\infty}^2\|\xi^{n-\theta}\|\|\eta_h^{n-\theta}\|)\notag\\
	&\leq C(\|\eta_h^{n}\|^2+\|\eta_h^{n-1}\|^2+\|\eta_h^{n-2}\|^2+h^{2k+2}),\quad 2\leq n\leq m,
\end{align}
where we have used \eqref{202509042305} and the facts that 
\begin{align}
\|\hat{\eta}_h^{n-\theta}\|_{0,4}^2\|\eta_h^{n-\theta}\|&\leq C \|\hat{\eta}_h^{n-\theta}\|_{\text{DG}}\|\hat{\eta}_h^{n-\theta}\|\|\eta_h^{n-\theta}\|\notag\\
&\leq C\|\hat{\eta}_h^{n-\theta}\|\|\eta_h^{n-\theta}\|\notag\\
&\leq C(\|\eta_h^{n}\|^2+\|\eta_h^{n-1}\|^2+\|\eta_h^{n-2}\|^2),\quad 2\leq n\leq m.\notag
\end{align}
By virtue of the Taylor formula, we can immediately obtain the 
\begin{align}
&\quad\gamma\text{Re}(u(t_{n-\theta})-u^{n-\theta},\eta_h^{n-\theta})+\text{Re}(D_\tau u^{n-\theta}-u(t_{n-\theta}),\eta_h^{n-\theta})+\text{Re}\left((\nu + \mathrm{i}\alpha)a_h(u^{n-\theta}-u(t_{n-\theta}), \eta_h^{n-\theta})\right)\notag\\
&\quad\quad+\text{Re}\left((\kappa + \mathrm{i}\beta)(\vert \hat{u}^{n-\theta} \vert^2 u^{n-\theta}-\vert u(t_{n-\theta}) \vert^2 u(t_{n-\theta}),\eta_h^{n-\theta})\right)\notag\\
&\leq C(\tau^4+\|\eta_{h}^{n}\|^2+\|\eta_{h}^{n-1}\|^2),\quad 2\leq n\leq m.
\end{align}
Utilizing the approximation properties of Ritz projection, we have 
\begin{align}
&-\text{Re}\left(D_\tau \xi^{n-\theta},\eta_h^{n-\theta}\right)+\gamma \text{Re}\left(\xi^{n-\theta},\eta_h^{n-\theta}\right)\leq C(h^{2k+2}+\|\eta_{h}^{n}\|^2+\|\eta_{h}^{n-1}\|^2),\quad 2\leq n\leq m.
\end{align}
Substituting the above inequality into Equation \eqref{0831007}, we can easily obtain
\begin{align}
&\quad\text{Re}\left(D_\tau \eta_h^{n-\theta},\eta_h^{n-\theta}\right)+\nu\|\eta_h^{n-\theta}\|_{a_h}^2-\gamma \|\eta_h^{n-\theta}\|^{2}\notag\\
&\leq C(\tau^4+h^{2k+2}+\|\eta_h^{n}\|^2+\|\eta_h^{n-1}\|^2+\|\eta_h^{n-2}\|^2),\quad 2\leq n\leq m.
\end{align}
Furthermore, employing Lemma \ref{lemma1}, we have
\begin{align}
\frac{1}{4\tau} \left(G^{n}-G^{n-1} \right)+\nu\|\eta_h^{n-\theta}\|_{a_h}^2&\leq\text{Re}\left(D_\tau \eta_h^{n-\theta},\eta_h^{n-\theta}\right)+\nu\|\eta_h^{n-\theta}\|_{a_h}^2\notag\\
&\leq C(\tau^4+h^{2k+2}+\|\eta_h^{n}\|^2+\|\eta_h^{n-1}\|^2+\|\eta_h^{n-2}\|^2),\quad 2\leq n\leq m,\label{202509042339}
\end{align}
where 
\begin{align}
G^{n} &= (3-2\theta)\|\eta_h^{n}\|^2 - (1-2\theta)\|\eta_h^{n-1}\|^2 + (2-\theta)(1-2\theta)\|\eta_h^{n}-\eta_h^{n-1}\|^2\notag\\
&\geq \frac{1}{1-\theta}\|\eta_h^n\|^2,\quad 1 \leq n \leq m.\label{9502}
\end{align}
Replacing $n$ with $j$ in \eqref{202509042339} and summing the resulting inequality over $j$ from $2$ to $n$ and multiplying both sides by $4\tau$, we have
\begin{align}
G^{n}+C\tau\sum\limits_{j=2}^{n}\|\eta_h^{j-\theta}\|_{a_h}^2&\leq
G^{1}+C(\tau^4+h^{2k+2})+C\tau\sum\limits_{j=2}^{n}\left(\|\eta_h^{j}\|^2+\|\eta_h^{j-1}\|^2+\|\eta_h^{j-2}\|^2\right)\notag\\
&\leq G^{1}+C(\tau^4+h^{2k+2})+C\tau\sum\limits_{j=2}^{n}\|\eta_h^{j}\|^2+C\|\eta_h^{1}\|^2\notag\\
&\leq G^{1}+C(\tau^4+h^{2k+2})+C\tau\sum\limits_{j=2}^{n}\|\eta_h^{j}\|^2, \quad 2\leq n\leq m,
\end{align}
where we have used Theorem \ref{lemma3.1}. Moreover, applying \eqref{9502} and the equivalence of norms \eqref{2508251529}, we can conclude that 
\begin{align}
	\|\eta_h^{n}\|^2+C\tau\sum\limits_{j=2}^{n}\|\eta_h^{j-\theta}\|_{\text{DG}}^2\leq G^{1}+ C(\tau^4+h^{2k+2})+C\tau\sum\limits_{j=2}^{n}\|\eta_h^{j}\|^2, \quad 2\leq n\leq m.
\end{align}
By using the Gronwall's inequality and the simple result
\begin{align}
G^{1} = (3-2\theta)\|\eta_h^{1}\|^2 - (1-2\theta)\|\eta_h^{0}\|^2 + (2-\theta)(1-2\theta)\|\eta_h^{1}-\eta_h^{0}\|^2\leq C(\tau^4+h^{2k+2}),\notag
\end{align}
it holds that 
\begin{align}
\|\eta_h^{m}\|^2+C\tau\sum\limits_{j=2}^{m}\|\eta_h^{j-\theta}\|_{\text{DG}}^2\leq C(\tau^4+h^{2k+2}).\label{95032}
\end{align}
At last, employing Lemma \ref{lemma5} and \eqref{95032}, it can be immediately obtain that 
\begin{align}
\tau\|\eta_h^{m}\|_{\text{DG}}&\leq (1+2\theta) \tau\sum_{j=1}^{m} \left\|\eta_h^{j-\theta}\right\|_{\text{DG}}\notag\\
&\leq (1+2\theta) \sum_{j=1}^{m}\sqrt{\tau}\cdot \left(\sqrt{\tau}\left\|\eta_h^{j-\theta}\right\|_{\text{DG}}\right)\notag\\
&\leq (1+2\theta) \sqrt{n\tau} \sqrt{\tau\sum\limits_{j=1}^{m}\|\eta_h^{j-\theta}\|_{\text{DG}}^2}\notag\\
&\leq (1+2\theta) \sqrt{T} \sqrt{\tau\sum\limits_{j=1}^{m}\|\eta_h^{j-\theta}\|_{\text{DG}}^2}\notag\\
&\leq C (\tau^2+h^{k+1}).\notag
\end{align}
Therefore, we can obtain 
\begin{align}
	\|\eta_h^m\|+\tau\|\eta_h^m\|_{\text{DG}}\leq c_3\left(\tau^2
	+h^{k+1}\right)\leq C_3\left(\tau^2
	+h^{k+1}\right).\label{95056}
\end{align}
All this complete the proof.
\end{proof}
\section{Numerical examples}\label{section4}
In this section, we provide two numerical example to verify the theoretical analysis provided in the previous section. One focuses on testing the accuracy in terms of the $L^2$-norm and $H^1$-norm, which are defined as follows
\begin{align}
L^2\text{-error} =  \|u(\boldsymbol{x},T)-u_h^N\|, \quad H^1\text{-error}=\|\nabla_h \left(u(\boldsymbol{x},T)-u_h^N\right)\|,\notag 
\end{align} 
where $\nabla_h\cdot$ is the piecewise defined gradient operator on mesh partitioning.
The other example is for verifying the decay property of the $L^2$-norm in the case where the right-hand side is zero and the robustness of the numerical scheme in curved-boundary domains. In the second example, we adopt a circular computational domain and employ unstructured polygonal meshes with varying polynomial degrees for testing purposes.
\begin{example}\label{example1}
In \eqref{202507262339}, we choose the parameters $\nu=\kappa=\alpha=\beta=\gamma=1$, $\Omega=(0,1)^2$ and the exact solution is taken as 
\begin{align}
u(\boldsymbol{x},t) = e^{it}\sin(x)\sin(y)(1-x)(1-y).
\end{align}
The right-hand side is solved from the equation based on the true solution above.

The numerical results corresponding to this example are tabulated in Tables \ref{table1} through \ref{table10}. The mesh partitions employed in this example are illustrated in Figures \ref{fig01} to \ref{fig3}. To assess the convergence accuracy in the spatial direction, we employ a sufficiently small time step and report the results for three categories of meshes and three polynomial degrees in Tables \ref{table1} to \ref{table9}. The convergence results for different polynomial degrees (\(k=1,2,3\)) on non-convex meshes with \(\theta=1/8\) are presented in Tables \ref{table1}--\ref{table3}. The orders of accuracy under Voronoi meshes (with $\theta=1/4$) are presented in Tables \ref{table4}--\ref{table6}, and the accuracy results under hybrid meshes (with $\theta=3/8$) are provided in Tables \ref{table7}--\ref{table9}. 
It can be seen from the above numerical results that the numerical schemes proposed in this paper have achieved the convergence orders predicted by the theoretical analysis in the previous section. Finally, the numerical results under non-convex meshes when $\theta=1/4$ and $k = 3$ in the temporal direction are presented in Table \ref{table10}.
It is in very good agreement with the theoretical second-order convergence accuracy.
\end{example}
\begin{figure}[h]
	\centering
	\includegraphics[width=7cm]{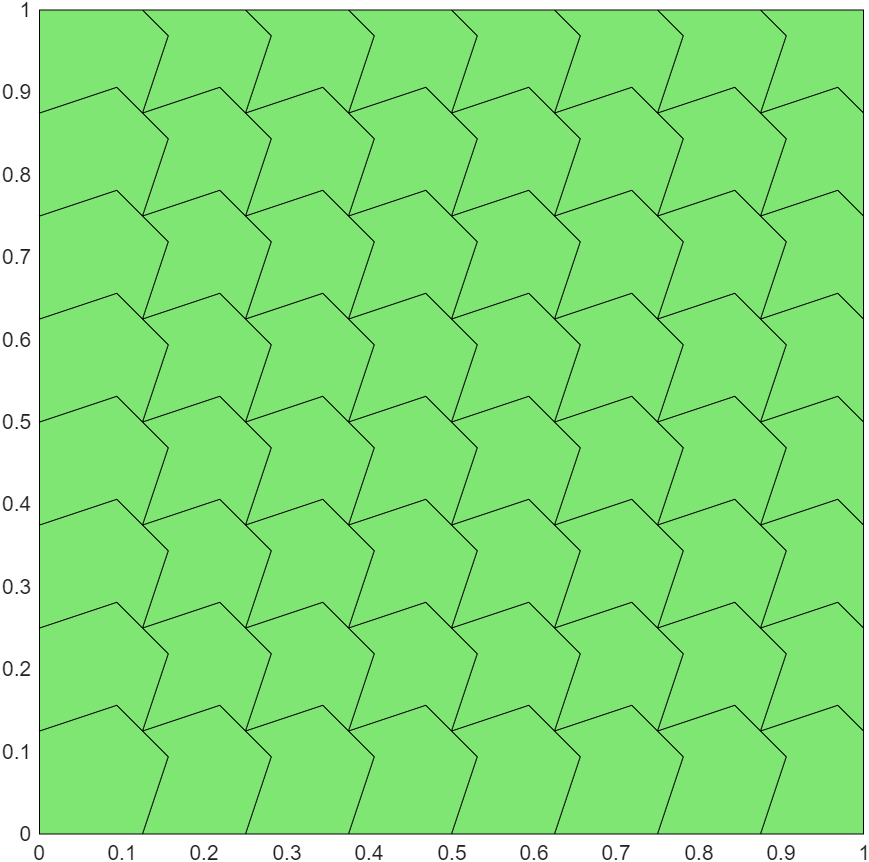}
	\caption{A non-convex partition composed of 64 elements.}\label{fig01}
\end{figure}

\begin{table}
	\centering 
	\caption{The convergence orders for Example \ref{example1} on non-convex meshes with $\theta=\frac{1}{8}$ and $k=1$.}
	\label{table1}
	\begin{tabular}{ccccc}
		\toprule 
		$h$ & $L^2$-error & $\text{Order}$ & $H^1$-error & $\text{Order}$\\
		\midrule
	1/2  & 6.77954e-03 &  --    & 8.57821e-02   & --\\
	1/4  & 1.96755e-03 &  1.7848 &  4.64516e-02  & 0.8849\\
	1/8  & 5.08362e-04  & 1.9525  & 2.36283e-02  & 0.9752\\
	1/16 & 1.27725e-04  & 1.9928  & 1.18364e-02  & 0.9973\\
	1/32 & 3.19239e-05  & 2.0003  & 5.90933e-03  & 1.0022\\
		\bottomrule
	\end{tabular}
\end{table}

\begin{table}\label{table2}
	\centering
	\caption{The convergence orders for Example \ref{example1} on non-convex meshes with $\theta=\frac{1}{8}$ and $k=2$.}
	\begin{tabular}{ccccc}
		\toprule 
		$h$ & $L^2$-error & $\text{Order}$ & $H^1$-error & $\text{Order}$\\
		\midrule
1/2 &  1.89377e-03 &  --      & 3.21908e-02  & --\\
1/4 &  2.59449e-04 &  2.8677  & 8.94403e-03  & 1.8477\\
1/8 &  3.21984e-05 &  3.0104  & 2.29592e-03  & 1.9619\\
1/16 &  3.95193e-06 &  3.0264  & 5.78684e-04  & 1.9882\\
1/32 &  4.87451e-07 &  3.0192  & 1.45110e-04  & 1.9956\\
		\bottomrule
	\end{tabular}
\end{table}

\begin{table}
	\centering
	\caption{The convergence orders for Example \ref{example1} on non-convex meshes with $\theta=\frac{1}{8}$ and $k=3$.}
	\label{table3}
	\begin{tabular}{ccccc}
		\toprule 
		$h$ & $L^2$-error & $\text{Order}$ & $H^1$-error & $\text{Order}$\\
		\midrule
1/2 &  3.32796e-04  & --  & 8.06780e-03 &  --\\
1/4  & 1.83062e-05   &4.1842  & 9.90653e-04  & 3.0257\\
1/8   &1.06468e-06   &4.1038  & 1.20608e-04  & 3.0381\\
1/16   &6.71119e-08   &3.9877  & 1.44979e-05  & 3.0564\\
		\bottomrule
	\end{tabular}
\end{table}

\begin{figure}[h]
	\centering
	\includegraphics[width=7cm]{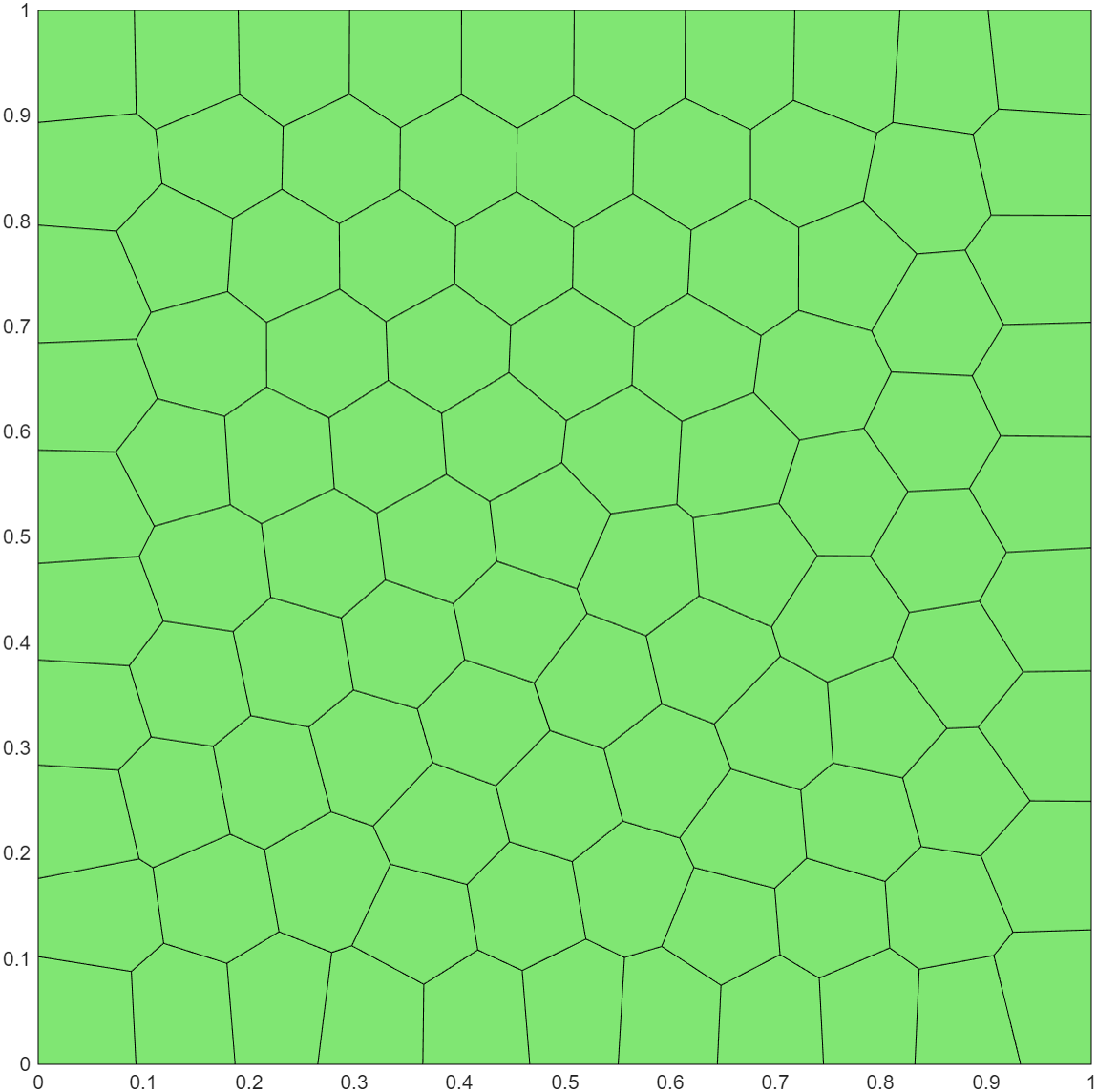}
	\caption{A Voronoi partition composed of 64 elements.}\label{fig2}
\end{figure}

\begin{table}
	\centering
	\caption{The convergence orders for Example \ref{example1} on Voronoi meshes with $\theta=\frac{1}{4}$ and $k=1$.}
	\label{table4}
	\begin{tabular}{ccccc}
		\toprule 
		$h$ & $L^2$-error & $\text{Order}$ & $H^1$-error & $\text{Order}$\\
		\midrule
	1/2 &  6.49409e-03 &  -- &  8.25620e-02  & --\\
	1/4 &  1.83974e-03 &  1.8196 &  4.45675e-02  & 0.8895\\
	1/8 &  4.42711e-04 &  2.0551 &  2.25716e-02  & 0.9815\\
	1/16 &  2.16718e-04 &  2.0611 &  1.58809e-02  & 1.0144\\
	1/32 &  2.61228e-05 &  2.0350 &  5.62040e-03  & 0.9990\\
		\bottomrule
	\end{tabular}
\end{table}

\begin{table}
	\centering
	\caption{The convergence orders for Example \ref{example1} on Voronoi meshes with $\theta=\frac{1}{4}$ and $k=2$.}\label{table5}
	\begin{tabular}{ccccc}
		\toprule 
		$h$ & $L^2$-error & $\text{Order}$ & $H^1$-error & $\text{Order}$\\
		\midrule
1/2 &  1.75795e-03 &  --  & 3.01168e-02  & --\\
1/4 &  2.43349e-04 &  2.8528  & 8.42798e-03  & 1.8373\\
1/8 &  2.76447e-05 &  3.1380  & 1.98436e-03  & 2.0865\\
1/16 &  9.30524e-06 &  3.1418  & 9.55196e-04  & 2.1096\\
1/32 &  3.93557e-07 &  3.0423  & 1.16588e-04  & 2.0229\\
		\bottomrule
	\end{tabular}
\end{table}

\begin{table}
	\centering
	\caption{The convergence orders for Example \ref{example1} on Voronoi meshes with $\theta=\frac{1}{4}$ and $k=3$.}
	\label{table6}
	\begin{tabular}{ccccc}
		\toprule 
		$h$ & $L^2$-error & $\text{Order}$ & $H^1$-error & $\text{Order}$\\
		\midrule
1/2 &  3.10919e-04 &  --  & 6.84783e-03 &  --\\
1/4 &  1.81557e-05 &  4.0980  & 8.53570e-04 &  3.0041\\
1/8 &  9.57198e-07 &  4.2455  & 9.58020e-05 &  3.1554\\
1/16 &  2.36222e-07 &  4.0373  & 3.28305e-05 &  3.0900\\
		\bottomrule
	\end{tabular}
\end{table}

\begin{figure}[h]
	\centering
	\includegraphics[width=7cm]{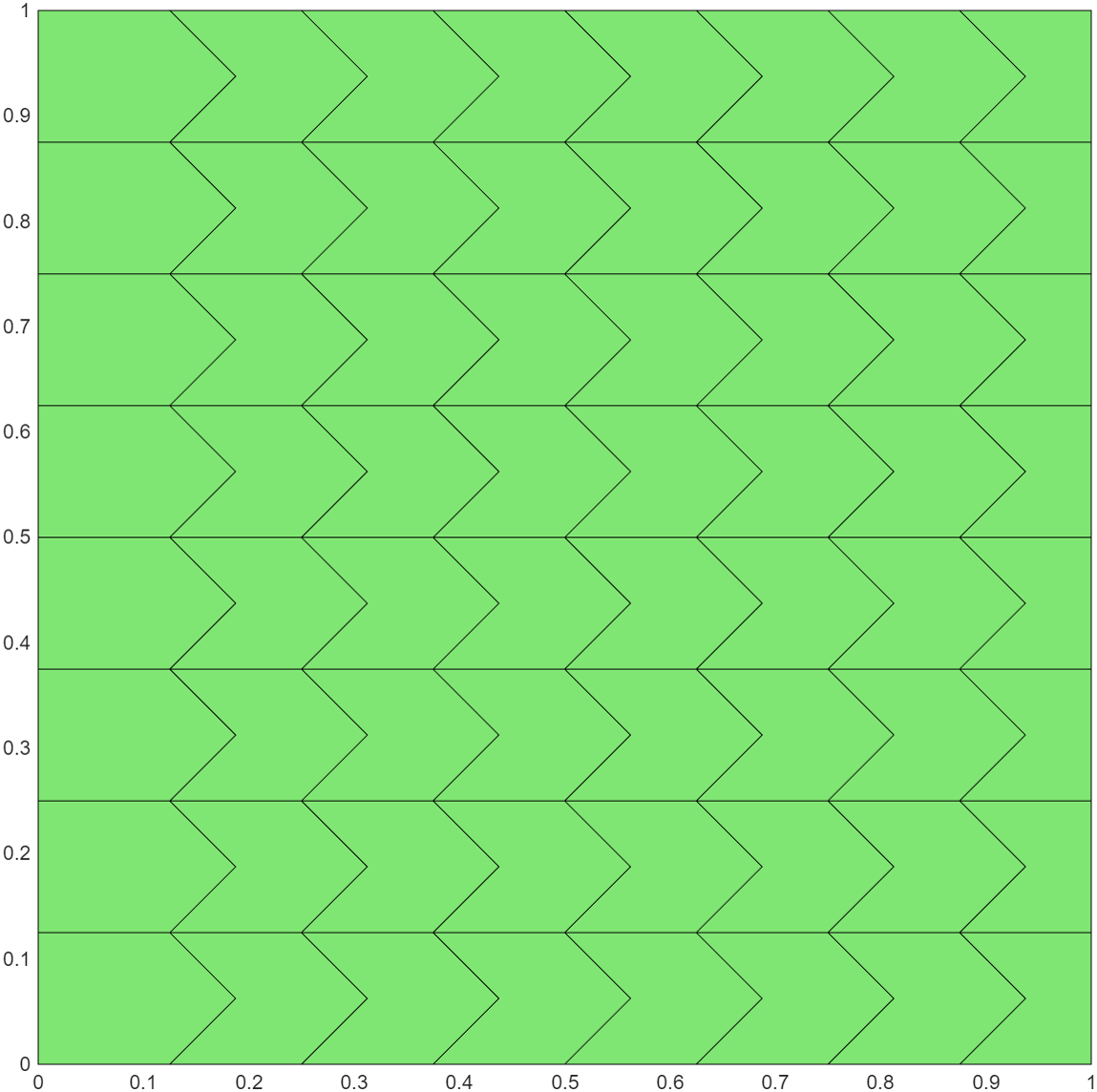}
	\caption{A mixed mesh partition containing concave and convex elements. }\label{fig3}
\end{figure}

\begin{table}
	\centering
	\caption{The convergence orders for Example \ref{example1} on mixed meshes with $\theta=\frac{3}{8}$ and $k=1$.}
	\label{table7}
	\begin{tabular}{ccccc}
		\toprule 
		$h$ & $L^2$-error & $\text{Order}$ & $H^1$-error & $\text{Order}$\\
		\midrule
	1/2  &  7.10731e-03 &  0.0000 &  8.77645e-02 &  0.0000\\
	1/4  &  2.07023e-03 &  1.7795 &  4.79082e-02 &  0.8734\\
	1/8  &  5.35665e-04 &  1.9504 &  2.45034e-02 &  0.9673\\
	1/16 &  1.34702e-04 &  1.9916 &  1.23081e-02 &  0.9934\\
	1/32 &  3.36800e-05 &  1.9998 &  6.16071e-03 &  0.9984\\
		\bottomrule
	\end{tabular}
\end{table}

\begin{table}\label{table8}
	\centering
	\caption{The convergence orders for Example \ref{example1} on mixed meshes with $\theta=\frac{3}{8}$ and $k=2$.}
	\begin{tabular}{ccccc}
		\toprule 
		$h$ & $L^2$-error & $\text{Order}$ & $H^1$-error & $\text{Order}$\\
		\midrule
	1/2 &  1.93394e-03  & --  & 3.38850e-02  & --\\
	1/4 &  2.66987e-04  & 2.8567  & 9.43686e-03  & 1.8443\\
	1/8 &  3.33308e-05  & 3.0018  & 2.41674e-03  & 1.9652\\
	1/16 &  4.10516e-06  & 3.0213  & 6.07827e-04  & 1.9913\\
	1/32 &  5.07309e-07  & 3.0165  & 1.52238e-04  & 1.9973\\
		\bottomrule
	\end{tabular}
\end{table}

\begin{table}
	\centering
	\caption{The convergence orders for Example \ref{example1} on mixed meshes with $\theta=\frac{3}{8}$ and $k=3$.}
	\label{table9}
	\begin{tabular}{ccccc}
		\toprule 
		$h$ & $L^2$-error & $\text{Order}$ & $H^1$-error & $\text{Order}$\\
		\midrule
	1/2&   3.41577e-04 &  --  & 8.39104e-03 &  --\\
	1/4&   1.88177e-05 &  4.1820  & 1.04558e-03 &  3.0045\\
	1/8&   1.08430e-06 &  4.1173  & 1.29440e-04 &  3.0140\\
	1/16&   6.59498e-08 &  4.0393  & 1.58202e-05 &  3.0324\\
		\bottomrule
	\end{tabular}
\end{table}

\begin{table}
	\centering
	\caption{The convergence orders in time direction for Example \ref{example1} on non-convex meshes with $\theta=\frac{1}{4}$ and $k=3$.}
	\label{table10}
	\begin{tabular}{ccccc}
		\toprule 
		$\tau$ & $L^2$-error & $\text{Order}$ & $H^1$-error & $\text{Order}$\\
		\midrule
	1/2 &  8.77078e-04 &  --   &3.92322e-03   &--\\
	1/4  & 1.61655e-04 &  2.4398  & 7.23625e-04  & 2.4387\\
	1/8  & 4.16780e-05 &  1.9556  & 1.86402e-04  & 1.9568\\
	1/16  & 1.04455e-05&   1.9964 &  4.67695e-05 &  1.9948\\
	1/32  & 2.61201e-06  & 1.9997 &  1.20221e-05 &  1.9599\\
		\bottomrule
	\end{tabular}
\end{table}

\begin{example}\label{example2}
	In \eqref{202507262339}, we set the parameters as  $\nu=\kappa=\alpha=\beta=\gamma=1$, take $\Omega=\{(x,y):x^2+y^2<1\}$ and define the source function $f$ according to the exact solution
	$$u(\boldsymbol{x},t)=i\sin\left(x^2+y^2-1\right)e^{-t}.$$
\end{example}

The purpose of this example is to test the robustness of the numerical algorithm in curved-edge domains and verify the boundedness of the $L^2$-norm. A polygonal partition of the unit circle is presented in Figure \ref{fig4}. It can be seen from Tables \ref{table11} and \ref{table12} that, in the circular domain, the computer implementation results of the fully discrete numerical scheme are consistent with the theoretical analysis. To test the boundedness of the numerical solution under the $L^2$-norm, we set the right-hand side term in the equation to zero. Obviously, there is no explicitly expressible solution in this case. Figure \ref{fig5} shows its variation trend over time, which is consistent with the argument result in Theorem \ref{theorem1}. This further demonstrates the robustness of the numerical scheme in this paper.

\begin{figure}[h]
	\centering
	\includegraphics[width=7cm]{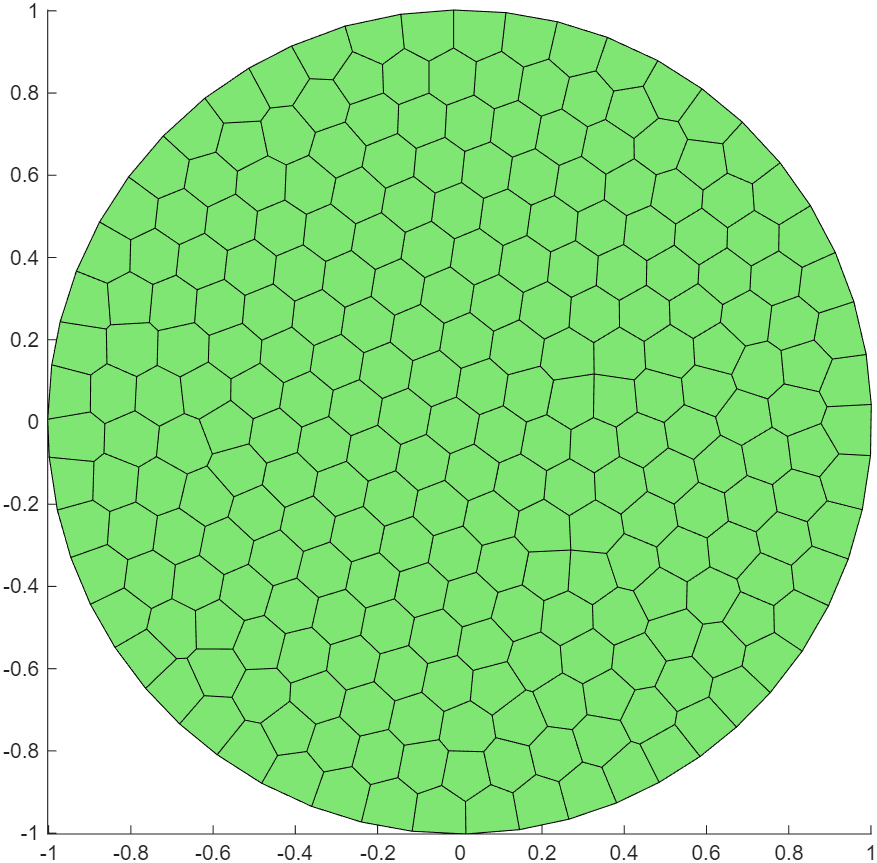}
	\caption{A Voronoi mesh partition for the unit circle.}\label{fig4}
\end{figure}

\begin{table}
	\centering
	\caption{The convergence orders for Example \ref{example2} on Voronoi meshes with $\theta=\frac{1}{4}$ and $k=1$.}
	\label{table11}
	\begin{tabular}{ccccc}
		\toprule 
		$h$ & $L^2$-error & $\text{Order}$ & $H^1$-error & $\text{Order}$\\
		\midrule
1/4  & 4.28568e-02 &  --  & 6.90124e-01 &  --\\
1/8  &9.33613e-03  & 2.1986   &3.30723e-01  & 1.0612\\
1/16 &  2.25235e-03 &  2.0514  & 1.63946e-01 &  1.0124\\
1/32 &  5.53500e-04 &  2.0248   &8.15875e-02 &  1.0068\\
		\bottomrule
	\end{tabular}
\end{table}

\begin{table}
	\centering
	\caption{The convergence orders for Example \ref{example2} on Voronoi meshes with $\theta=\frac{1}{4}$ and $k=2$.}
	\label{table12}
	\begin{tabular}{ccccc}
		\toprule 
		$h$ & $L^2$-error & $\text{Order}$ & $H^1$-error & $\text{Order}$\\
		\midrule
1/4  & 1.77305e-03 &  --  & 4.40879e-02 &  --\\
1/8   &2.55595e-04  & 2.7943  & 1.30722e-02  & 1.7539\\
1/16   &3.36348e-05  & 2.9258 & 3.45896e-03  & 1.9181\\
1/32   &4.45948e-06   &2.9150 &  8.90269e-04 &  1.9580\\
		\bottomrule
	\end{tabular}
\end{table}

\begin{figure}[h]
	\centering
	\includegraphics[width=10cm]{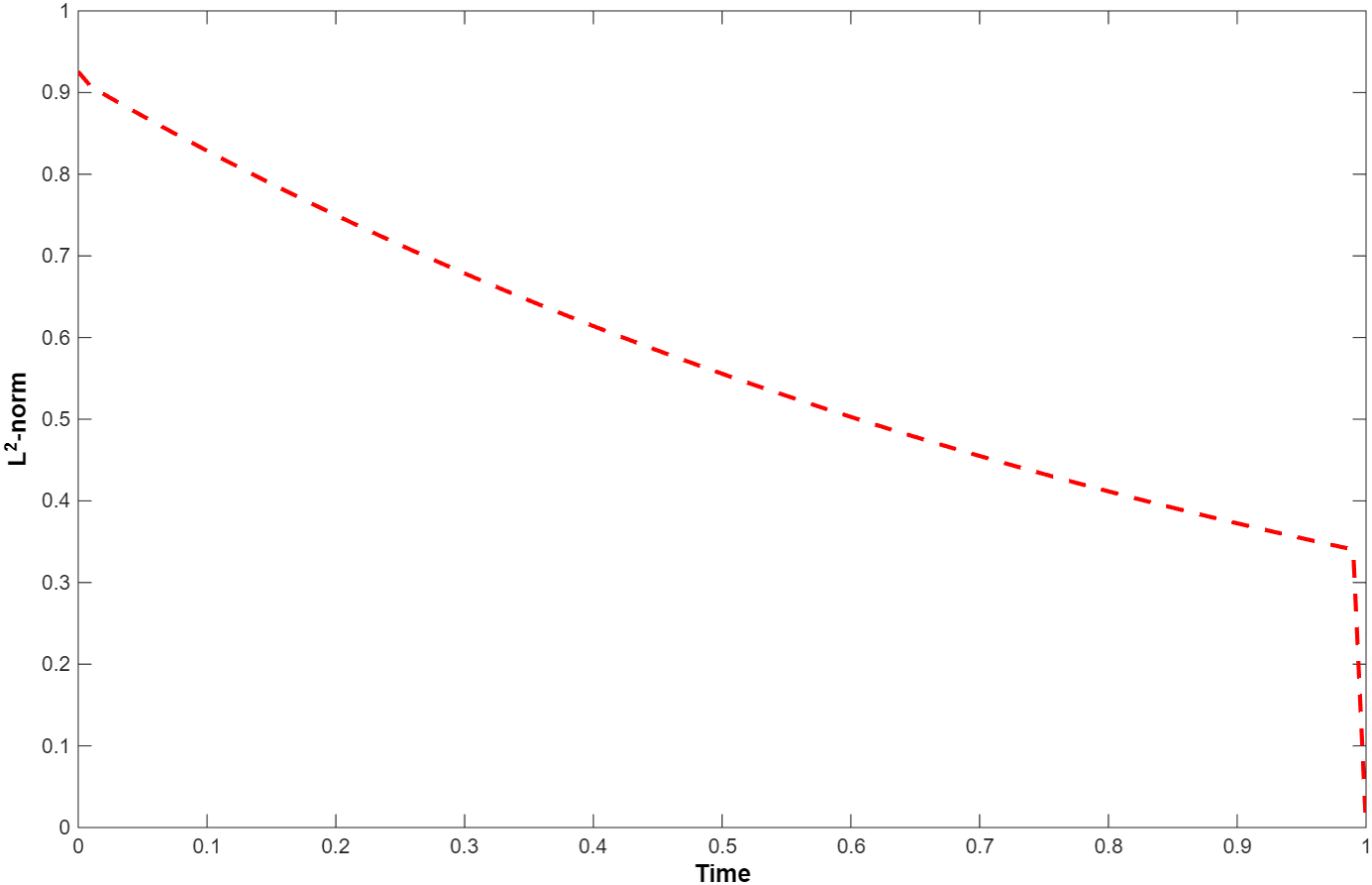}
	\caption{$L^2$-$\text{norm}$ of the numerical solution for Example \ref{example2} with $h=1/30$ and $\tau=1/100$.}\label{fig5}
\end{figure}

\section{Conclusions}\label{section5}
In this paper, we propose a novel linearized fully discrete discontinuous finite element scheme, which adopts the second-order $\theta$ scheme and the polygonal discontinuous finite element method in the temporal and spatial directions, respectively. The stability of the numerical method and its unconditional convergence under the $L^2$-norm are rigorously proven by means of the generalized inverse inequality and the transfer formula. There are the following two issues worthy of further consideration in the future. Firstly, high-order discrete methods can be adopted in the temporal direction, such as the Gaussian collocation method and the discontinuous finite element method. Secondly, the adaptive polygonal discontinuous Galerkin method deserves further exploration in the future.

\section*{Declaration of competing interest}
The authors declare that they have no known competing financial interests or personal relationships that could have appeared to influence the work reported in this paper.

\section*{Data availability}
Data will be made available on request.

\section*{Acknowledgments}
This work is supported by the Doctoral Starting Foundation of Pingdingshan University (No. PXY-BSQD2023022) and the Natural Science Foundation of Henan Province (Nos. 242300420655, 252300420343).

\end{document}